\documentclass[10pt]{article}
\usepackage{amsmath, amsthm, amssymb, amsfonts, mathrsfs, mathtools}
\usepackage{graphicx}
\usepackage{makeidx}
\usepackage[left=2cm,right=2cm,top=2cm,bottom=2cm]{geometry}
\usepackage{caption}
\usepackage{subcaption}
\usepackage[section]{placeins}
\usepackage{enumitem}
\usepackage{tikz}
\usepackage{stmaryrd}

\newcommand{\Mod}[1]{\ (\mathrm{mod}\ #1)}

\usetikzlibrary{decorations.pathreplacing}
\tikzstyle{vertex}=[auto=left,circle,draw=black,fill=white, inner sep=1.5]

\usepackage{hyperref}
\hypersetup{colorlinks=true, citecolor={blue}, linkcolor=blue, filecolor=magenta, urlcolor=cyan}

\newtheorem{theorem}{Theorem}[section]

\newtheorem{lema}[theorem]{Lemma}
\newtheorem{corollary}{Corollary}[theorem]

\newtheorem{ex}{Example}[section]

\renewcommand{\i}{\mathbf{i}}

\newcommand{\Cay}{\operatorname{Cay}}

\newcommand{\ord}{\operatorname{ord}}

\newcommand{\Tr}{\operatorname{Tr}}
\newcommand{\Gal}{\operatorname{Gal}}
\newcommand{\IRR}{\operatorname{IRR}}
\newcommand{\Irr}{\operatorname{Irr}}

\textheight=22cm \textwidth=16cm \oddsidemargin=0.2in
\evensidemargin=0.2in \topmargin=-0.25in

\title{H-integral and Gaussian integral normal mixed Cayley graphs}

\author{Monu Kadyan and Bikash Bhattacharjya\\
Department of Mathematics\\
Indian Institute of Technology Guwahati, India\\
monu.kadyan@iitg.ac.in\\ b.bikash@iitg.ac.in}

\date{}

\begin{document}
	\maketitle
	
	\vspace{-0.3in}
	
\begin{center}{\textbf{Abstract}}\end{center}
	
\noindent If all the eigenvalues of the Hermitian-adjacency matrix of a mixed graph are integers, then the mixed graph is called \emph{H-integral}. If all the eigenvalues of the (0,1)-adjacency matrix of a mixed graph are \emph{Gaussian integers}, then the mixed graph is called \emph{Gaussian integral}. For any finite group $\Gamma$, we characterize the set $S$ for which the normal mixed Cayley graph $\text{Cay}(\Gamma, S)$ is H-integral. We further prove that a normal mixed Cayley graph is H-integral if and only if it is Gaussian integral.


\vspace*{0.3cm}
\noindent 
\textbf{Keywords.} integral graphs; H-integral mixed graph; Gaussian integral mixed graph; normal mixed Cayley graph. \\
\textbf{Mathematics Subject Classifications:} 05C50, 20C15.

\section{Introduction}

A \emph{mixed graph} $G$ is a pair $(V(G),E(G))$, where $V(G)$ and $E(G)$ are the vertex set and the edge set of $G$, respectively. In this case, $E(G)\subseteq V(G) \times V(G)\setminus \{(u,u)\colon u\in V(G)\}$. If $G$ is a mixed graph, then $(u,v)\in E(G)$ need not imply that $(v,u)\in E(G)$;  for further information see \cite{2015mixed}. If both $(u,v)$ and $(v,u)$ are members of $E(G)$, then we call $(u,v)$ to be an \textit{undirected} edge. If only one of $(u,v)$ and $(v,u)$ is a member of $E(G)$, then we call $(u,v)$ to be a \textit{directed} edge. It is clear that a mixed graph $G$ can have both undirected and directed edges. If all the edges of $G$ are undirected (resp. directed) then we call $G$ to be a \textit{simple graph} (resp. \textit{oriented graph}). Some definitions and results of this paper have similarities with those in the paper~\cite{kadyanHS-IntegSecKind}. Throughout the paper, we consider $\mathbf{i}=\sqrt{-1}$.

The (0,1)-\textit{adjacency matrix} and the \textit{Hermitian-adjacency matrix} of a mixed graph $G$ on $n$ vertices are denoted by $\mathcal{A}(G)=(a_{uv})_{n\times n}$ and $\mathcal{H}(G)=(h_{uv})_{n\times n}$, respectively, where 
\[a_{uv} = \left\{ \begin{array}{rl}
	1 &\mbox{ if }
	(u,v)\in E \\ 
	0 &\textnormal{ otherwise,}
\end{array}\right.     ~~~~~\text{ and }~~~~~~ h_{uv} = \left\{ \begin{array}{rl}
	1 &\mbox{ if }
	(u,v)\in E \textnormal{ and } (v,u)\in E \\ \mathbf{i} & \mbox{ if } (u,v)\in E \textnormal{ and } (v,u)\not\in E \\
	-\mathbf{i} & \mbox{ if } (u,v)\not\in E \textnormal{ and } (v,u)\in E\\
	0 &\textnormal{ otherwise.}
\end{array}\right.\] 

The Hermitian-adjacency matrix of mixed graphs was introduced by Liu and Li \cite{2015mixed} in 2015, and later by Guo and Mohar \cite{2017mixed} independently. Indeed,  Bapat et al. \cite{bapat2012weighted} introduced the notion of 3-colored digraph and its adjacency matrix in 2012. The Hermitian-adjacency matrix of mixed graphs is a special case of the adjacency matrix of a 3-colored digraph.

Let $G$ be a mixed graph. We refer to an eigenvalue of $\mathcal{H}(G)$ as an \emph{H-eigenvalue} of $G$. An eigenvalue of $\mathcal{A}(G)$ is referred to as an \emph{eigenvalue} of $G$. Similarly, the \emph{H-spectrum} of $G$ is the multi-set of the H-eigenvalues of $G$, and the \emph{spectrum} of $G$ is the multi-set of the eigenvalues of $G$. Since $\mathcal{H}(G)$  is a Hermitian matrix,  its H-eigenvalues are real numbers. However, if $G$ has at least one directed edge then $\mathcal{A}(G)$ is not symmetric, and so the eigenvalues of $G$ may or may not be real numbers.

If all of the H-eigenvalues of a mixed graph $G$ are integers, it is said to be \textit{H-integral}. If all of the eigenvalues of a mixed graph $G$ are Gaussian integers, then the mixed graph is said to be \textit{Gaussian integral}. The term \emph{integral graph} refers to an H-integral simple graph. For a simple graph $G$, note that $\mathcal{A}(G)=\mathcal{H}(G)$. As a result, the terms H-eigenvalue, H-spectrum and H-integrality of a simple graph $G$ have the same meaning with that of the eigenvalue, spectrum and integrality of $G$, respectively.

In 1974, Harary and Schwenk~\cite{harary1974graphs} proposed the question of characterization of integral  graphs. This problem has inspired a lot of interest over the last half-century. For more information on integral graphs, we refer the reader to \cite{ahmadi2009graphs, balinska2002survey, csikvari2010integral, watanabe1979note, watanabe1979integral}.


Throughout the paper, we consider $\Gamma$ to be a finite group with identity element ${\mathbf 1}$. Let $S$ be a subset of $\Gamma$ that does not contain ${\mathbf 1}$.  If $S$ is closed under inverse (resp. $a^{-1} \not\in S$ for all $a\in S$), it is said to be \textit{symmetric} (resp. \textit{skew-symmetric}). Define $\overline{S}= \{u\in S\colon u^{-1}\not\in S \}$. Clearly, $S\setminus \overline{S}$ is symmetric, while $\overline{S}$ is skew-symmetric.  The \textit{mixed Cayley graph} ${\rm Cay}(\Gamma,S)$ is a mixed graph, where $V({\rm Cay}(\Gamma,S))=\Gamma$ and $E({\rm Cay}(\Gamma,S))=\{ (a,b)\colon a,b\in \Gamma, ba^{-1}\in S \}$. If $S$ is symmetric (resp. skew-symmetric), we call ${\rm Cay}(\Gamma,S)$ to be a \textit{simple Cayley graph} (resp. \textit{oriented Cayley graph}). A mixed Cayley graph ${\rm Cay}(\Gamma, S)$ is called \textit{normal} if $S$ is the union of some conjugacy classes of the group $\Gamma$.

In 1982, Bridge and Mena \cite{bridges1982rational} presented a characterization of integral Cayley graphs over abelian groups. Later on, same characterization was obtained by Wasin So \cite{2006integral} for cyclic groups in 2005. In 2009, Klotz and Sander \cite{klotz2010integral} proved that if ${\rm Cay}(\Gamma,S)$ over an abelian group $\Gamma$ is integral, then $S$ belongs to the Boolean algebra $\mathbb{B}(\Gamma)$ generated by the subgroups of $\Gamma$. Moreover, they conjectured that the converse is also true, which was proved by Alperin and Peterson \cite{alperin2012integral}. For results on integral Cayley graphs over non-abelian groups, we refer the reader to \cite{cheng2019integral,ku2015cayley, lu2018integral}. In \cite{kadyan2021integral} and \cite{kadyan2021integralAbelian}, we characterized H-integral mixed Cayley graphs over cyclic group and abelian group in terms of their connection set. In 2014, Godsil \emph{et al.} \cite{godsil2014rationality} characterized integral normal Cayley graphs. Xu \emph{et al.} \cite{xu2011gaussian} and Li \cite{li2013circulant} characterized the set $S$ for which the mixed circulant graph ${\rm Cay}(\mathbb{Z}_n, S)$ is Gaussian integral.

The paper is organized as follows. In Section~\ref{Prlemch4}, we present some preliminary notions and known results. We also express the H-eigenvalues of a normal mixed Cayley graph $\Cay(\Gamma, S)$ in terms of the irreducible characters of $\Gamma$. In Section~\ref{H-integral}, we characterize the set $S$ for which the normal mixed Cayley graph ${\rm Cay}(\Gamma, S)$ is H-integral. This extends the results of \cite{kadyan2021integral,kadyan2021integralAbelian} to normal mixed Cayley graphs. In Section~\ref{gussian-integral},  we  prove that a normal mixed Cayley graph is H-integral if and only if it is \emph{Gaussian integral}.


\section{Preliminaries}\label{Prlemch4}

In this section, we determine the H-eigenvalues of a normal mixed Cayley graph $\Cay(\Gamma, S)$ in terms of the irreducible characters of $\Gamma$. Finally, we show that the normal mixed Cayley graph is H-integral if and only if each of its directed and undirected portions are H-integral.

For $x\in \Gamma$, let $\ord(x)$ denote the order of $x$. If $g$ and $h$ are elements of the group $\Gamma$, then we call $h$ a \textit{conjugate}\index{conjugate} of $g$ if $g=x^{-1}hx$ for some $x\in \Gamma$. The \textit{conjugacy class}\index{conjugacy class} of $g$, denoted ${\rm Cl}(g)$, is the set of all conjugates of $g$ in $\Gamma$. Define $C_{\Gamma}(g)$ to be the set of all elements of $\Gamma$ that commute with $g$. We denote the \textit{group algebra}\index{group algebra} of $\Gamma$ over a field $\mathbb{F}$ by $\mathbb{F}\Gamma$. That is, $\mathbb{F}\Gamma$ is the set of all formal sums $\sum\limits_{g\in \Gamma}a_g g$, where $a_g\in \mathbb{F}$, and we assume $1.g=g$ to have $\Gamma \subseteq \mathbb{F}\Gamma$.

A \textit{representation}\index{representation} of a finite group $\Gamma$ is a homomorphism $\rho \colon \Gamma \to \text{GL}_n(\mathbb{C})$, where $\text{GL}_n(\mathbb{C})$ is  the set of all $n\times n$ invertible matrices with complex entries. Here, the number $n$ is called the \textit{degree}\index{degree} of $\rho$. Two representations $\rho_1$ and $\rho_2$ of $\Gamma$ of degree $n$ are \textit{equivalent}\index{equivalent} if there is a $T\in \text{GL}_n(\mathbb{C})$ such that $T\rho_1(x)=\rho_2(x)T$ for each $x\in \Gamma$.

Let $\rho \colon \Gamma \to \text{GL}_n(\mathbb{C})$ be a representation of $\Gamma$. The \textit{character}\index{character} $\chi_{\rho}\colon \Gamma \to \mathbb{C}$ of $\rho$ is defined by setting $\chi_{\rho}(x):=\Tr(\rho(x))$ for $x\in \Gamma$, where $\Tr(\rho(x))$ is the trace of $\rho(x)$. By degree of $\chi_{\rho}$, we mean the degree of $\rho$, which is simply $\chi_{\rho}(\textbf{1})$. If $W$ is a $\rho(x)$-invariant subspace of $\mathbb{C}^n$ for each $x\in \Gamma$, then we say that $W$ is a $\rho(\Gamma)$-invariant subspace of $\mathbb{C}^n$. If $\{ \mathbf{0}\}$ and $\mathbb{C}^n$ are the only $\rho(\Gamma)$-invariant subspaces of $\mathbb{C}^n$, then  we say $\rho$ an \textit{irreducible representation}\index{irreducible representation} of $\Gamma$, and the corresponding character $\chi_{\rho}$ an \textit{irreducible character}\index{irreducible character} of $\Gamma$. 

For a group $\Gamma$, we denote by $\IRR(\Gamma)$ and $\Irr(\Gamma)$ the complete set of non-equivalent irreducible representations of $\Gamma$  and the complete set of non-equivalent irreducible characters of $\Gamma$, respectively. For $z\in \mathbb{C}$, let $\overline{z}$ denote the complex conjugate of $z$ and $\Re (z)$ (resp. $\Im (z)$) denote the real part (resp. imaginary part) of the complex number $z$.

\begin{theorem}[\cite{steinberg2009representation}]\label{NewThmChap1Added}
Let $\Gamma$ be a finite group and $\rho$ be a representation of $\Gamma$ of degree $k$ with corresponding  character $\chi$. If $x\in \Gamma$ and $\ord(x)=m$, then the following assertions hold.
\begin{enumerate}[label=(\roman*)]
\item $\rho(x)$ is similar to a diagonal matrix with diagonal entries $\epsilon_1,\hdots,\epsilon_k$, where $\epsilon_i^m=1$ for each \linebreak[4] $i\in \{1,\hdots,k\}$.
\item $\chi(x)= \sum\limits_{i=1}^{k}\epsilon_i$, where $\epsilon_i^m=1$ for each $i\in \{1,\hdots,k\}$.
\item $\chi(x^{-1})=\overline{\chi(x)}$.
\end{enumerate}
\end{theorem} 
\begin{proof} Note that $\rho(x)^m$ is an identity matrix. Therefore, $\rho(x)$ is diagonalizable, and that its eigenvalues  are $m$-th roots of unity. Thus the proofs of  Part (i) and Part (ii) follow. 

Again, $xx^{-1}=\mathbf{1}$ gives that $\rho(x^{-1})=\rho(x)^{-1}$. Therefore if $\chi(x)= \sum_{i=1}^{k}\epsilon_i$, then we have that $\chi(x^{-1})= \sum_{i=1}^{k}\epsilon_i^{-1}= \sum_{i=1}^{k}\overline{\epsilon}_i= \overline{\chi(x)}$.
\end{proof}

For a representation $\rho\colon \Gamma \to {\rm GL}_n(\mathbb{C})$ of $\Gamma$, define $\overline{\rho}\colon \Gamma \to {\rm GL}_n(\mathbb{C})$ by $\overline{\rho}(x):=\overline{\rho(x)}$, where $\overline{\rho(x)}$ is the matrix whose entries are the complex conjugates of the corresponding entries of $\rho(x)$. Note that if $\rho$ is irreducible, then $\overline{\rho}$ is also irreducible. Hence we have the following lemma. See Proposition 9.1.1 and Corollary 9.1.2 in \cite{steinberg2009representation} for details. 

\begin{lema}[\cite{steinberg2009representation}]\label{newLemmaConjugateChara}
	Let $\Gamma$ be a finite group and ${\rm Irr}(\Gamma)=\{ \chi_{1},\ldots,\chi_h \}$. If $j\in \{ 1,\ldots,h\}$, then there exists $k\in \{ 1,\ldots,h\}$ satisfying $\overline{\chi}_k=\chi_j$, where $\overline{\chi}_k\colon \Gamma \to \mathbb{C}$ such that $\overline{\chi}_k(x)=\overline{\chi_k(x)}$ for each $x \in \Gamma$.
\end{lema}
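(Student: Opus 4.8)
The plan is to exploit the two facts that are already assembled immediately before the statement: first, that the complex conjugate $\overline{\rho}$ of an irreducible representation $\rho$ is again irreducible, and second, the elementary observation that the character of $\overline{\rho}$ is precisely the pointwise conjugate of the character of $\rho$. Granting these, the lemma reduces to the completeness of $\text{Irr}(\Gamma)$ as the list of all irreducible characters, together with the fact that pointwise complex conjugation is an involution on functions $\Gamma \to \mathbb{C}$.

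Concretely, I would proceed as follows. Fix $j \in \{1,\dots,h\}$ and choose an irreducible representation $\rho_j : \Gamma \to GL(V)$ whose character is $\chi_j$, i.e.\ $\chi_{\rho_j} = \chi_j$. I would first record the character computation
\[
\chi_{\overline{\rho_j}}(g) = Tr\bigl(\overline{\rho_j}(g)\bigr) = Tr\bigl(\overline{\rho_j(g)}\bigr) = \overline{Tr\bigl(\rho_j(g)\bigr)} = \overline{\chi_j(g)}
\]
for every $g \in \Gamma$, where the third equality is just the fact that the trace commutes with entrywise complex conjugation. Hence $\chi_{\overline{\rho_j}} = \overline{\chi_j}$. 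Since $\overline{\rho_j}$ is irreducible (by the fact quoted before the lemma, with reference to Steinberg), its character $\overline{\chi_j}$ is an irreducible character of $\Gamma$. Because $\text{Irr}(\Gamma) = \{\chi_1,\dots,\chi_h\}$ is the complete list of pairwise non-equivalent irreducible characters, there must exist an index $k \in \{1,\dots,h\}$ with $\chi_k = \overline{\chi_j}$.

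Finally I would take pointwise conjugates of both sides of $\chi_k = \overline{\chi_j}$: since $\overline{\overline{\chi_j}} = \chi_j$, this yields $\overline{\chi_k} = \chi_j$, which is exactly the claimed identity. (Equivalently, one can phrase the argument as: the assignment $\chi \mapsto \overline{\chi}$ sends $\text{Irr}(\Gamma)$ into itself and is an involution, hence a bijection on the finite set $\text{Irr}(\Gamma)$, so every $\chi_j$ lies in its image.)

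I do not anticipate a genuine obstacle here, as the statement is a standard packaging of classical representation-theoretic facts. The only point requiring a moment of care is the character computation $\chi_{\overline{\rho_j}} = \overline{\chi_j}$, and I would be careful to state it explicitly rather than treat it as self-evident, since it is the hinge that converts the irreducibility of $\overline{\rho_j}$ into a statement about the specific character $\overline{\chi_j}$ appearing in the list $\text{Irr}(\Gamma)$.
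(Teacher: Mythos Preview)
Your proposal is correct and follows exactly the approach the paper indicates: the paper does not give a proof of this lemma but simply records, just before the statement, that $\overline{\rho}$ is irreducible whenever $\rho$ is, and then refers to Proposition~9.1.1 and Corollary~9.1.2 of Steinberg's book for details. Your argument is precisely the spelling-out of those details, so there is nothing to add or correct.
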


\begin{theorem}[\cite{steinberg2009representation}]\label{OrthoCharaRepr}
Let $\Gamma$ be a finite group and $x,y \in \Gamma$. If $\Irr(\Gamma)=\{ \chi_1,\ldots,\chi_h \}$, then 
\begin{enumerate}[label=(\roman*)]
\item 
\begin{align}
	\sum_{x\in \Gamma} \chi_{j}(x) \overline{\chi_{k} (x)} = \left\{ \begin{array}{cl}
		|\Gamma| & \mbox{ if $j=k$} \\
		0 &   \mbox{ otherwise}, 
	\end{array}\right.\nonumber
\end{align}
\item 
\begin{align}
	\sum_{j=1}^{h} \chi_{j}(x) \overline{\chi_{j} (y)} = \left\{ \begin{array}{ll}
		|C_{\Gamma}(x)| & \mbox{ if $x$ and $y$ are conjugates to each other} \\
		0 &   \mbox{ otherwise}. 
	\end{array}\right.\nonumber
	\end{align}
\end{enumerate}
\end{theorem}

For a function $f \colon \Gamma \to \mathbb{C}$, let $[f(yx^{-1})]_{x,y \in \Gamma}$ be the matrix whose rows and columns are indexed by the elements of $\Gamma$, and for $x,y \in \Gamma$, the $(x,y)$-th entry of the matrix is $f(yx^{-1})$. 

\begin{theorem}[\cite{foster2016spectra}]\label{EigNorColCayMix1}
Let $\Gamma$ be a finite group and $\Irr(\Gamma)=\{\chi_1,\ldots,\chi_h \}$. If $f \colon \Gamma \to \mathbb{C}$ is a class function, then the spectrum of the matrix $[f(yx^{-1})]_{x,y \in \Gamma}$ is $\{[\gamma_1]^{d_1^2},\ldots,[\gamma_h]^{d_h^2}\}$, where $$\gamma_{j} = \frac{1}{\chi_j(\mathbf 1)} \sum_{x\in \Gamma} f(x)\chi_{j}(x) \hspace{0.2cm} \textnormal{ and } \hspace{0.2cm} d_j = \chi_j(\mathbf{1})$$ for each $j \in \{ 1,\ldots , h\}$.
\end{theorem} 

\begin{lema}\label{EigNorCayMix1}
Let $\Gamma$ be a finite group. If $\Irr(\Gamma)=\{ \chi_1,\ldots,\chi_h \}$, then the H-spectrum of the normal mixed Cayley graph $\Cay(\Gamma, S)$ is $\{ [\gamma_{1}]^{d_1^2},\ldots, [\gamma_{h}]^{d_h^2} \},$ where $\gamma_j=\lambda_j + \mu_j$, $$\lambda_j= \frac{1}{\chi_j(1)} \sum_{s \in S \setminus \overline{S}} \chi_j(s),\hspace{0.2cm} \mu_j=\frac{\i}{\chi_j(1)}  \sum_{s \in \overline{S}}( \chi_j(s)-\chi_j(s^{-1})),$$ $\textnormal{ and } d_j=\chi_j(\mathbf 1) \textnormal{ for each } j\in \{1,\ldots,h\}$.
\end{lema}
\begin{proof}
Let $f\colon \Gamma \to \{0,1,\i,-\i\}$ be the function such that 
	$$f(s)= \left\{ \begin{array}{rl}
		1 & \mbox{if } s\in S \setminus \overline{S} \\
		\i & \mbox{if } s\in \overline{S}\\
		-\i & \mbox{if } s\in \overline{S}^{-1}\\ 
		0 &   \mbox{otherwise}. 
	\end{array}\right.
	$$
Since $S$ is a union of some conjugacy classes of $\Gamma$,  $f$ is a class function. The Hermitian adjacency matrix of $\Cay(\Gamma, S)$ is equal to $[f(yx^{-1})]_{x,y \in \Gamma}$. By Theorem~\ref{EigNorColCayMix1}, $$\gamma_j= \frac{1}{\chi_j(1)} \bigg( \sum_{s \in S \setminus \overline{S}} \chi_j(s) + \sum_{s \in \overline{S}}\i \chi_j(s)+ \sum_{s \in \overline{S}^{-1}} (-\i)\chi_j(s) \bigg),$$ and the result follows.
\end{proof}

As special cases of Lemma~\ref{EigNorCayMix1}, we have the following two corollaries.

\begin{corollary}\label{coro1EigNorCayMix1}
Let $\Gamma$ be a finite group. If $\Irr(\Gamma)=\{ \chi_1,\ldots,\chi_h \}$, then the H-spectrum (or spectrum) of the normal simple Cayley graph $\Cay(\Gamma, S)$ is $\{ [\lambda_{1}]^{d_1^2},\ldots, [\lambda_{h}]^{d_h^2} \},$ where $$\lambda_j= \frac{1}{\chi_j(1)} \sum_{s \in S } \chi_j(s) \textnormal{ and } d_j=\chi_j(\mathbf 1) \hspace{0.2cm} \textnormal{ for each } j\in \{1,\ldots,h\}.$$
\end{corollary}

\begin{corollary}\label{coro2EigNorCayMix1}
Let $\Gamma$ be a finite group. If $\Irr(\Gamma)=\{ \chi_1,\ldots,\chi_h \}$, then the H-spectrum of the normal oriented Cayley graph $\Cay(\Gamma, S)$ is $\{ [\mu_{1}]^{d_1^2},\ldots, [\mu_{h}]^{d_h^2} \},$ where $$ \mu_j=\frac{\i}{\chi_j(1)}  \sum_{s \in S}( \chi_j(s)-\chi_j(s^{-1})) \textnormal{ and } d_j=\chi_j(\mathbf 1) \hspace{0.2cm} \textnormal{ for each } j\in \{1,\ldots,h\}.$$
\end{corollary}

\begin{lema}\label{SeprateInteg}
If $\Gamma$ is a finite group, then the normal mixed Cayley graph ${\rm Cay}(\Gamma, S)$ is H-integral if and only if ${\rm Cay}(\Gamma, S\setminus \overline{S})$ is integral (or H-integral) and ${\rm Cay}(\Gamma, \overline{S})$ is H-integral.
\end{lema}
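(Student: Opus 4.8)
The plan is to read off the three relevant spectra directly from Lemma~\ref{EigNorCayMix} and Corollaries~\ref{coro1EigNorCayMix} and~\ref{coro2EigNorCayMix}. Since $S\setminus\overline{S}$ is symmetric and $\overline{\overline{S}}=\overline{S}$, the H-eigenvalues of $\text{Cay}(\Gamma,S)$ are the numbers $\gamma_j=\lambda_j+\mu_j$, the eigenvalues of the simple graph $\text{Cay}(\Gamma,S\setminus\overline{S})$ are the $\lambda_j$ (here integral and H-integral coincide), and the H-eigenvalues of the oriented graph $\text{Cay}(\Gamma,\overline{S})$ are the $\mu_j$, all indexed by $\text{Irr}(\Gamma)=\{\chi_1,\dots,\chi_h\}$. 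The forward implication is then immediate: if every $\lambda_j$ and every $\mu_j$ is an integer, so is every $\gamma_j$. The whole content is the converse, so I would assume every $\gamma_j\in\mathbb{Z}$ and try to separate this into $\lambda_j\in\mathbb{Z}$ and $\mu_j\in\mathbb{Z}$ for each $j$.

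The separation device is complex conjugation of characters. Fixing $j$, I would use Lemma~\ref{newLemmaConjugateChara} to choose $k$ with $\chi_k=\overline{\chi_j}$, noting $\chi_k(1)=\chi_j(1)$, and then exploit the standard identity $\overline{\chi_j(s)}=\chi_j(s^{-1})$ for characters of finite groups. Because $S\setminus\overline{S}$ is symmetric, $\lambda_j$ is real and $\lambda_k=\tfrac{1}{\chi_j(1)}\sum_{s\in S\setminus\overline{S}}\overline{\chi_j(s)}=\overline{\lambda_j}=\lambda_j$. Writing $A_j=\sum_{s\in\overline{S}}(\chi_j(s)-\chi_j(s^{-1}))$, the same identity gives $\overline{A_j}=-A_j$, whence $\mu_k=\tfrac{i}{\chi_j(1)}\overline{A_j}=-\mu_j$. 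Thus the conjugate pair of characters produces the pair of H-eigenvalues $\gamma_j=\lambda_j+\mu_j$ and $\gamma_k=\lambda_j-\mu_j$; this holds uniformly, including the case when $\chi_j$ is real, where $k=j$, $A_j=0$, and $\mu_j=0$.

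Adding and subtracting these gives $\gamma_j+\gamma_k=2\lambda_j$ and $\gamma_j-\gamma_k=2\mu_j$, so the hypothesis forces $2\lambda_j\in\mathbb{Z}$ and $2\mu_j\in\mathbb{Z}$. The main obstacle is precisely this factor of two: half-integrality of $\lambda_j$ and $\mu_j$ is not by itself the conclusion, and I expect this to be where the argument could go wrong if handled carelessly. To upgrade it I would invoke algebraic integrality. Each $\lambda_j$ is an eigenvalue of the $(0,1)$-matrix $\mathcal{A}(\text{Cay}(\Gamma,S\setminus\overline{S}))$, and each $\mu_j$ is an eigenvalue of the Hermitian matrix $\mathcal{H}(\text{Cay}(\Gamma,\overline{S}))$ whose entries lie in $\{0,i,-i\}\subset\mathbb{Z}[i]$; in both cases the characteristic polynomial is monic with algebraic-integer coefficients, so $\lambda_j$ and $\mu_j$ are algebraic integers. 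Since $2\lambda_j\in\mathbb{Z}$ already forces $\lambda_j\in\mathbb{Q}$ (and likewise $\mu_j\in\mathbb{Q}$), and a rational algebraic integer is a rational integer, I conclude $\lambda_j\in\mathbb{Z}$ and $\mu_j\in\mathbb{Z}$. As $j$ was arbitrary, $\text{Cay}(\Gamma,S\setminus\overline{S})$ is integral and $\text{Cay}(\Gamma,\overline{S})$ is H-integral, which completes the converse and hence the lemma.
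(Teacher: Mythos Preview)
Your proposal is correct and follows essentially the same approach as the paper: both arguments use Lemma~\ref{newLemmaConjugateChara} to pair $\chi_j$ with $\chi_k=\overline{\chi_j}$, obtain $\gamma_k=\lambda_j-\mu_j$, solve $\lambda_j=\tfrac{\gamma_j+\gamma_k}{2}$ and $\mu_j=\tfrac{\gamma_j-\gamma_k}{2}$, and then upgrade from rational to integral via the ``rational algebraic integer'' argument. Your treatment of the algebraic-integrality step is in fact more explicit than the paper's (which just asserts it); one cosmetic point is that what you label the ``forward implication'' is the converse of the stated biconditional, though you clearly have the logic right.
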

\begin{proof}
Let ${\rm Irr}(\Gamma)=\{ \chi_1,\ldots,\chi_h \}$ and $\gamma_{j}$ be an H-eigenvalue of the normal mixed Cayley graph ${\rm Cay}(\Gamma, S)$. By Lemma~\ref{EigNorCayMix1}, we have $\gamma_{j}=\lambda_{j}+\mu_{j}$, where $$\lambda_j= \frac{1}{\chi_j({\mathbf 1})} \sum\limits_{s \in S \setminus \overline{S}} \chi_j(s) \textnormal{ and } \mu_j=\frac{{\mathbf i}}{\chi_j({\mathbf 1})}  \sum\limits_{s \in \overline{S}}( \chi_j(s)-\chi_j(s^{-1}))$$ for each $j\in \{1,\ldots,h\}$. Assume that ${\rm Cay}(\Gamma, S)$ is H-integral and $j\in \{ 1,\ldots,h\}$. By Lemma~\ref{newLemmaConjugateChara}, there exists $k\in \{ 1,\ldots,h\}$ such that $\overline{\chi}_k=\chi_j$. Thus 
$$\gamma_k=\overline{\gamma}_k=\overline{\lambda}_k+ \overline{\mu}_k=\lambda_j-\mu_j.$$ 

By assumption, $\gamma_j$ and $\gamma_k$ are integers. As $\gamma_j=\lambda_j+\mu_j$ and $\gamma_k=\lambda_j-\mu_j$, we get $\lambda_j=\frac{\gamma_j+\gamma_k}{2}$ and $\mu_j=\frac{\gamma_j-\gamma_k}{2}$. Thus $\lambda_j$ and $\mu_j$ are rational algebraic integers, and so they are integers. Hence by Corollaries \ref{coro1EigNorCayMix1} and \ref{coro2EigNorCayMix1}, ${\rm Cay}(\Gamma, S\setminus \overline{S})$ is integral and ${\rm Cay}(\Gamma, \overline{S})$ is H-integral.

Conversely, assume that ${\rm Cay}(\Gamma, S\setminus \overline{S})$ is integral and ${\rm Cay}(\Gamma,  \overline{S})$ is H-integral. Using Lemma~\ref{EigNorCayMix1}, ${\rm Cay}(\Gamma, S)$ is H-integral.
\end{proof}

Let $n\geq 2$ be a positive integer.  For a divisor $d$ of $n$, define $G_n(d)=\{k: 1\leq k\leq n-1, \gcd(k,n)=d \}$. It is clear that $G_n(d)=dG_{\frac{n}{d}}(1)$.

Let $\mathbb{B}({\Gamma})$ be the boolean algebra\index{boolean algebra} generated by the subgroups of $\Gamma$. That is, $\mathbb{B}({\Gamma})$ is the set whose elements are obtained by intersections, unions and complements of subgroups of $\Gamma$. Define an equivalence relation $\sim$ on $\Gamma$ such that $x\sim y$ if and only if $y=x^k$ for some $k\in G_m(1)$, where $m=\ord(x)$. For $x\in \Gamma$, let $[ x ]$ denote the equivalence class of $x$ with respect to the relation $\sim$. Note that minimal non-empty sets in a boolean algebra are called its \textit{atoms}\index{atoms}. 

\begin{theorem}[\cite{alperin2012integral}]\label{NewThmAtomsAndEquiv}
The atoms of the boolean algebra $\mathbb{B}(\Gamma)$ are the sets $[x]$ for each $x\in \Gamma$.
\end{theorem}

By Theorem~\ref{NewThmAtomsAndEquiv}, we observe that each element of $\mathbb{B}(\Gamma)$ can be expressed as a disjoint union of the equivalence classes of the relation $\sim$ on $\Gamma$. Thus
\[\mathbb{B}(\Gamma)=\{[ x_1 ]\cup\cdots\cup [ x_k ]\colon  x_1,\ldots,x_k\in \Gamma, k\in \mathbb{N}\}.\]

\begin{theorem}[\cite{godsil2014rationality}]\label{NorMixCayGraphInteg}
Let $\Gamma$ be a finite group and ${\rm Cay}(\Gamma, S)$ be a normal simple Cayley graph. Then  ${\rm Cay}(\Gamma, S)$ is integral if and only if $S \in \mathbb{B}(\Gamma)$.
\end{theorem}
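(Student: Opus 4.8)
The plan is to run the eigenvalues $\lambda_j = \frac{1}{\chi_j(1)}\sum_{s\in S}\chi_j(s)$ of Corollary~\ref{coro1EigNorCayMix} through the Galois action on the cyclotomic field containing all character values, and to convert the condition ``$\lambda_j\in\mathbb{Z}$ for every $j$'' into a closure property of $S$ that is exactly membership in $\mathbb{B}(\Gamma)$. Since $\mathcal{A}(\text{Cay}(\Gamma,S))$ is a $(0,1)$-matrix, each $\lambda_j$ is an algebraic integer, so $\lambda_j\in\mathbb{Z}$ if and only if $\lambda_j\in\mathbb{Q}$. Let $m$ be the exponent of $\Gamma$ and $\zeta=e^{2\pi i/m}$; all character values lie in $\mathbb{Q}(\zeta)$, and $\mathrm{Gal}(\mathbb{Q}(\zeta)/\mathbb{Q})\cong(\mathbb{Z}/m\mathbb{Z})^{*}$ acts by $\sigma_t(\zeta)=\zeta^{t}$ for $\gcd(t,m)=1$. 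Because $\rho_j(g)$ is diagonalizable with $m$-th-root-of-unity eigenvalues, one has the standard identity $\sigma_t(\chi_j(g))=\chi_j(g^{t})$. Hence $\lambda_j\in\mathbb{Q}$ for all $j$ if and only if $\sigma_t(\lambda_j)=\lambda_j$ for all $j$ and all $t$ with $\gcd(t,m)=1$.

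Next I would analyze the power maps. For $\gcd(t,m)=1$, the map $\pi_t\colon x\mapsto x^{t}$ is a bijection of $\Gamma$: it is surjective since $x=(x^{t'})^{t}$ whenever $tt'\equiv 1\Mod{m}$ (using $x^{m}=1$), hence bijective on the finite set $\Gamma$. Moreover $(hxh^{-1})^{t}=hx^{t}h^{-1}$, so $\pi_t$ carries conjugacy classes to conjugacy classes; therefore $S^{(t)}:=\pi_t(S)$ is again a union of conjugacy classes with $|S^{(t)}|=|S|$. Reindexing the sum by $w=s^{t}$ gives
\[
\sigma_t(\lambda_j)=\frac{1}{\chi_j(1)}\sum_{s\in S}\chi_j(s^{t})=\frac{1}{\chi_j(1)}\sum_{w\in S^{(t)}}\chi_j(w),
\]
which is precisely the $j$-th eigenvalue of the normal simple Cayley graph $\text{Cay}(\Gamma,S^{(t)})$.

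The crux is a spectral-determination lemma: among normal connection sets the assignment $S\mapsto(\lambda_j)_j$ is injective. Indeed $\sum_{s\in S}\chi_j(s)=\chi_j(1)\lambda_j$, so if normal sets $S,T$ yield the same eigenvalues then the class function $\mathbf{1}_S-\mathbf{1}_T$ satisfies $\sum_{x\in\Gamma}(\mathbf{1}_S(x)-\mathbf{1}_T(x))\chi_j(x)=0$ for every $j$; since by Lemma~\ref{newLemmaConjugateChara} the functions $\overline{\chi_j}$ are again the full set of irreducible characters, $\mathbf{1}_S-\mathbf{1}_T$ is orthogonal to an orthonormal basis of the space of class functions and so vanishes, giving $S=T$. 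Assembling everything: $\text{Cay}(\Gamma,S)$ is integral iff $\sigma_t(\lambda_j)=\lambda_j$ for all $t,j$, iff $\text{Cay}(\Gamma,S^{(t)})$ and $\text{Cay}(\Gamma,S)$ share all eigenvalues, iff (by the lemma) $S^{(t)}=S$ for every $t$ coprime to $m$. As $\gcd(t,m)=1\Leftrightarrow\gcd(t,\mathrm{ord}(s))=1$ and the Chinese remainder theorem show $\{s^{t}:\gcd(t,m)=1\}=\{y:\langle y\rangle=\langle s\rangle\}=[s]$, this closure is equivalent to $S$ being a union of atoms, that is, $S\in\mathbb{B}(\Gamma)$ by Lemma~\ref{atomsboolean}.

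I expect the main obstacles to be two technical points in the nonabelian setting. First, one must justify carefully that the power maps $\pi_t$ are genuine bijections preserving conjugacy classes, so that each $S^{(t)}$ is an honest normal connection set whose spectrum is indexed by the same characters; this is what allows the Galois conjugate $\sigma_t(\lambda_j)$ to be read as an eigenvalue of a sibling graph. Second, the reverse direction hinges entirely on the spectral-determination lemma, whose proof relies on Lemma~\ref{newLemmaConjugateChara} to ensure the $\overline{\chi_j}$ still form a complete orthonormal system. The final identification of $\{s^{t}\}$ with the atom $[s]$ is routine but should be stated explicitly.
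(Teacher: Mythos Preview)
The paper does not give its own proof of this theorem; it is quoted from \cite{godsil2014rationality} and used as a black box. So there is no in-paper argument to compare against.

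Your argument is correct and is essentially the standard Galois-theoretic proof. A couple of remarks. First, your spectral-determination lemma is exactly the linear-independence fact the paper exploits elsewhere (see the passage around Equation~(\ref{eqLIsumZero}) in the proof of Theorem~\ref{MainTheoremIntChara}): the columns of the character table indexed by conjugacy-class representatives are linearly independent, so a class function orthogonal to every $\chi_j$ vanishes. Second, the identification $\{s^{t}:\gcd(t,m)=1\}=[s]$ does require the small lifting step you mention (given $k$ with $\gcd(k,\mathrm{ord}(s))=1$, find $t\equiv k\pmod{\mathrm{ord}(s)}$ with $\gcd(t,m)=1$); this is fine by the Chinese remainder theorem applied to $\mathrm{ord}(s)$ and the primes dividing $m$ but not $\mathrm{ord}(s)$. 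With those two points made explicit, your proof is complete and in the same spirit as the Galois arguments the paper uses for Theorem~\ref{MainTheoremIntChara}.
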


\noindent Let $n\equiv 0 \pmod 4$. For a divisor $d$ of $\frac{n}{4}$ and $r \in \{ 1,3\}$, define 
$$G_n^r(d)=\{ dk\colon k\equiv r \Mod 4, \gcd(dk,n )= d \}.$$ 
It is easy to see that $G_n(d)=G_n^1(d) \cup G_n^3(d), G_n^1(d) \cap G_n^3(d)=\emptyset$ and $G_n^r(d)=dG_{\frac{n}{d}}^r(1)$ for $r\in \{ 1,3\}$.

Let $\Gamma(4)$ be the set of all $x\in \Gamma$ satisfying $\ord(x)\equiv 0 \pmod 4$. That is, $\Gamma(4):=\{x\in \Gamma\colon \ord(x)\equiv 0 \pmod 4\}$. Define an equivalence relation $\approx$ on $\Gamma(4)$ such that $x \approx y$ if and only if $y=x^k$ for some  $k\in G_m^1(1)$, where $m=\ord(x)$. Observe that if  $x,y\in \Gamma(4)$ and $x \approx y$ then $x \sim y$, but the converse need not be true. For example, consider $x=5\pmod {12}$, $y=11\pmod {12}$ in $\mathbb{Z}_{12}$. Here $x,y\in \mathbb{Z}_{12}(4)$ and $x \sim y$, but $x \not\approx y$. For $x\in \Gamma(4)$, we denote the equivalence class of $x$ with respect to the relation $\approx$ by $\llbracket x \rrbracket$.  For $\Gamma(4) \neq \emptyset$, define $\mathbb{D}(\Gamma)$ to be the class of all skew-symmetric subsets $S$, where $S=\llbracket x_1 \rrbracket\cup\cdots\cup \llbracket x_k \rrbracket $ for some $x_1,\ldots,x_k\in \Gamma(4)$. For  $\Gamma(4) = \emptyset $, define $\mathbb{D}(\Gamma):=\{ \emptyset \}$. Thus
\[\mathbb{D}(\Gamma)=\left\{ \begin{array}{ll}
	\{\llbracket x_1 \rrbracket\cup\cdots\cup \llbracket x_k \rrbracket \colon x_1,\ldots,x_k\in \Gamma(4),  k\in \mathbb{N} \} &\mbox{ if }\Gamma(4)\neq \emptyset \\ 
	\{ \emptyset\} &\mbox{ if }\Gamma(4)= \emptyset.
\end{array}\right. \]

\section{H-integral normal mixed Cayley graphs}\label{H-integral}

Let the order of the group $\Gamma$ be $n$ and $\chi \in {\rm Irr}(\Gamma)$ be of degree $d$. Let $x\in \mathbb{Q}({\mathbf i})\Gamma$ be such that $x=\sum\limits_{g\in \Gamma} {\mathbf i} c_g g$, where $c_g\in \mathbb{Z}$ for all $g\in \Gamma$. Define $\chi(x) :=\sum\limits_{g\in \Gamma} {\mathbf i} c_g \chi(g)$. Note that $\chi(g)^n=\chi(g^n)=\chi({\mathbf 1})=d$, and so $\chi(g)$ is an algebraic integer for each $g\in \Gamma$. Therefore ${\mathbf i} c_g \chi(g)$ is an algebraic integer for each $g\in \Gamma$, and hence $\chi(x)$ is an algebraic integer. 

Let ${\rm Irr}(\Gamma)=\{ \chi_1,\ldots,\chi_h \}$. Let $E$ be the matrix $[E_{jg}]$ of size $h \times n$, whose rows are indexed by $1,\ldots,h$ and columns are indexed by the elements of $\Gamma$ such that $E_{jg}=\chi_{j}(g)$. Note that $EE^*=nI_h$ and the rank of $E$ is $h$, where $E^*$ is the conjugate transpose of $E$ and $I_h$ is the $h\times h$ identity matrix.  

Let ${\rm Gal}(\mathbb{K}/\mathbb{F})$ denote the Galois group of an extension $\mathbb{K}$ over the field $\mathbb{F}$. It is well known that ${\rm Gal}(\mathbb{Q}(\omega_m)/\mathbb{Q})=\{ \sigma_r \colon r\in G_m(1),\sigma_r(\omega_m)=\omega_m^r \}$. For example, see Section 14.5 in \cite{dummitandfoote}. If \linebreak[4] $m\equiv 0 \Mod 4$, then $\mathbb{Q}({\mathbf i},\omega_m)=\mathbb{Q}(\omega_m)$. Therefore, ${\rm Gal}(\mathbb{Q}({\mathbf i},\omega_m)/\mathbb{Q}({\mathbf i}))$ is a subgroup of ${\rm Gal}(\mathbb{Q}(\omega_m)/\mathbb{Q})$. Thus ${\rm Gal}(\mathbb{Q}({\mathbf i},\omega_m)/\mathbb{Q}({\mathbf i}))$ contains those automorphisms in ${\rm Gal}(\mathbb{Q}(\omega_m)/\mathbb{Q})$ that fix ${\mathbf i}$. Note that \linebreak[4] $G_m({1})=G_m^1(1)\cup G_m^3(1)$ and $G_m^1(1)\cap G_m^3(1)=\emptyset$. If $r\in G_m^1(1)$ then $\sigma_r({\mathbf i})={\mathbf i}$, and if $r \in G_m^3(1)$ then $\sigma_r({\mathbf i})=-{\mathbf i}$. Thus 
\[{\rm Gal}(\mathbb{Q}({\mathbf i},\omega_m)/\mathbb{Q}({\mathbf i}))= {\rm Gal}(\mathbb{Q}(\omega_m)/\mathbb{Q}({\mathbf i})) =\{ \sigma_r \colon r\in G_m^1(1), \sigma_r(\omega_m)=\omega_m^r\}.\]
If $m\not\equiv 0 \Mod 4$, then
$[\mathbb{Q}({\mathbf i},\omega_m) : \mathbb{Q}({\mathbf i})]= \varphi(m).$ 
Thus the field $\mathbb{Q}({\mathbf i},\omega_m)$ is a Galois extension of $\mathbb{Q}({\mathbf i})$ of degree $\varphi(m)$. Any automorphism of the field $\mathbb{Q}({\mathbf i},\omega_m)$ is uniquely determined by its action on $\omega_m$. Hence ${\rm Gal}(\mathbb{Q}({\mathbf i},\omega_m)/\mathbb{Q}({\mathbf i}))=\{ \tau_r \colon r\in G_m(1),\tau_r(\omega_m)=\omega_m^r \text{ and }\tau_r({\mathbf i})={\mathbf i}\}$. 

Let $g \in \Gamma$, $m=\ord(g)$, and $\chi$ be a character of $\Gamma$. By Theorem~\ref{NewThmChap1Added}, $\chi(g)= \sum_{i=1}^{k}\epsilon_i$, where $\epsilon_1,\ldots , \epsilon_k$ are some $m$-th roots of unity. If $m \equiv 0 \Mod 4$ and $\sigma_r \in \Gal(\mathbb{Q}(\i,\omega_m)/\mathbb{Q}(\i))$, then 
\begin{align*}
\sigma_r (\chi(g))  =  \sigma_r \left(\sum_{i=1}^{k}\epsilon_i \right)  =  \sum_{i=1}^{k}\sigma_r (\epsilon_i) =  \sum_{i=1}^{k}\epsilon_i^r = \chi(g^r).
\end{align*}
Similarly, if $m \not\equiv 0 \Mod 4$ and $\tau_r \in \Gal(\mathbb{Q}(\i,\omega_m)/\mathbb{Q}(\i))$, then also $\tau_r(\chi(g)) = \chi(g^r)$.

\begin{theorem}\label{MainTheoremIntChara}
Let $\Gamma$ be a finite group and ${\rm Irr}(\Gamma)=\{ \chi_1,\ldots,\chi_h \}$. If $x=\sum\limits_{g\in \Gamma} {\mathbf i} c_g g$, where $c_g\in \mathbb{Z}$ for all $g\in \Gamma$, then $\chi_j(x)$ is an integer for each $j\in \{1,\ldots, h\}$ if and only if the following conditions hold:
\begin{enumerate}[label=(\roman*)]
\item $\sum\limits_{s\in {\rm Cl}(g_1)}c_s=\sum\limits_{s\in {\rm Cl}(g_2)}c_s$ $\hspace{0.1cm}$ for each $g_1,g_2\in \Gamma(4)$ and $g_1\approx g_2$;
\item $\sum\limits_{s\in {\rm Cl}(g)}c_s=-\sum\limits_{s\in {\rm Cl}(g^{-1})}c_s$ $\hspace{0.1cm}$ for each $g \in \Gamma$;
\item $\sum\limits_{s\in {\rm Cl}(g)}c_s=0$ $\hspace{0.1cm}$ for all $g \in \Gamma \setminus \Gamma(4)$.
\end{enumerate}
\end{theorem}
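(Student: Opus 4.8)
The plan is to use that each $\chi_j(x)$ is an algebraic integer (already noted before the statement), so $\chi_j(x)\in\mathbb{Z}$ if and only if $\chi_j(x)\in\mathbb{Q}$; and since $\mathbb{Q}=\mathbb{Q}(i)\cap\mathbb{R}$, this splits into two independent requirements: that $\chi_j(x)$ be real, and that $\chi_j(x)\in\mathbb{Q}(i)$. I would fix a common multiple $N$ of the element orders — the exponent of $\Gamma$ works, and then $4\mid N$ exactly when $\Gamma(4)\neq\emptyset$ — so that every character value $\chi_j(g)$ lies in $\mathbb{Q}(w_N)$ and $\chi_j(x)\in\mathbb{Q}(i,w_N)$. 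Throughout I would write $a_{Cl(g)}=\sum_{s\in Cl(g)}c_s$ for the class sums, so that $\chi_j(x)=i\sum_C a_C\,\chi_j(g_C)$, the sum running over conjugacy classes with representatives $g_C$. The two workhorses are the Galois identity $\sigma_r(\chi_j(g))=\chi_j(g^r)$ (valid because $\chi_j(g)$ is a sum of roots of unity, each of which $\sigma_r$ raises to its $r$-th power) and the invertibility of the character table, i.e.\ that the irreducible characters $\chi_1,\dots,\chi_h$ are linearly independent as class functions.

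For the reality condition, $\overline{\chi_j(x)}=-i\sum_g c_g\,\overline{\chi_j(g)}=-i\sum_g c_g\,\chi_j(g^{-1})$. Imposing $\chi_j(x)=\overline{\chi_j(x)}$ and reindexing gives $\sum_g(c_g+c_{g^{-1}})\chi_j(g)=0$; grouping by conjugacy class turns the coefficient into $a_{Cl(g)}+a_{Cl(g^{-1})}$, and invertibility of the character table forces $a_{Cl(g)}+a_{Cl(g^{-1})}=0$ for every $g$. Thus $\chi_j(x)$ is real for all $j$ if and only if condition (ii) holds.

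For the $\mathbb{Q}(i)$-condition, $\chi_j(x)\in\mathbb{Q}(i)$ for all $j$ exactly when $\chi_j(x)$ is fixed by every $\sigma\in Gal(\mathbb{Q}(i,w_N)/\mathbb{Q}(i))$. Using $\sigma(\chi_j(g))=\chi_j(g^r)$ (with $\sigma(w_N)=w_N^r$ and $\sigma(i)=i$) and reindexing as before, the fixing condition becomes $a_{Cl(g)}=a_{Cl(g^s)}$ for all $g$ and all $s$ in the index set of the Galois group — namely $G_N^1(1)$ when $4\mid N$ and $G_N(1)$ when $4\nmid N$, exactly as computed in the paragraph preceding the statement. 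Here I would use the surjectivity of $(\mathbb{Z}/N\mathbb{Z})^\times\to(\mathbb{Z}/\operatorname{ord}(g)\mathbb{Z})^\times$, together with careful tracking of residues mod $4$, to identify the power-orbit $\{g^s:s\in G_N^1(1)\}$: it equals the $\approx$-class $\llbracket g\rrbracket$ when $g\in\Gamma(4)$ and the $\sim$-class $[g]$ when $g\notin\Gamma(4)$ (and likewise $\{g^s:s\in G_N(1)\}=[g]$ in the case $4\nmid N$, where $\Gamma(4)=\emptyset$). Hence $\chi_j(x)\in\mathbb{Q}(i)$ for all $j$ if and only if $a$ is constant on every $\approx$-class inside $\Gamma(4)$ — this is condition (i) — and constant on every $\sim$-class inside $\Gamma\setminus\Gamma(4)$.

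Finally I would combine the two halves. Conjoining (ii) with ``(i) together with $\sim$-constancy on $\Gamma\setminus\Gamma(4)$'' yields (iii): for $g\notin\Gamma(4)$ one has $g^{-1}=g^{\operatorname{ord}(g)-1}\in[g]$ with $g^{-1}\notin\Gamma(4)$, so $\sim$-constancy gives $a_{Cl(g^{-1})}=a_{Cl(g)}$, while (ii) gives $a_{Cl(g)}=-a_{Cl(g^{-1})}$, and together these force $a_{Cl(g)}=0$. Conversely, (i), (ii), (iii) recover $\sim$-constancy on $\Gamma\setminus\Gamma(4)$ for free, since (iii) makes all those class sums vanish. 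This shows ``$\chi_j(x)\in\mathbb{Z}$ for all $j$'' is equivalent to (i)--(iii). The main obstacle, and the step I would write out most carefully, is the combinatorial identification of the power-orbit $\{g^s\}$ with the $\approx$- or $\sim$-class — keeping the coprimality and the mod-$4$ bookkeeping straight under the reduction $(\mathbb{Z}/N\mathbb{Z})^\times\to(\mathbb{Z}/\operatorname{ord}(g)\mathbb{Z})^\times$ — since everything else reduces to the two standard facts quoted at the outset.
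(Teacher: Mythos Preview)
Your proposal is correct and follows essentially the same route as the paper: Galois invariance over $\mathbb{Q}(i)$ for conditions (i) and (iii), complex conjugation for condition (ii), and linear independence of the irreducible characters to pass from vanishing character sums to the class-sum identities. The only differences are in packaging --- you split the rationality question upfront as $\mathbb{Q}=\mathbb{Q}(i)\cap\mathbb{R}$ and work with the global exponent $N$, whereas the paper treats each element order $m=\mathrm{ord}(g)$ locally and expresses the character-table inversion via the auxiliary function $\theta_t=\sum_j\chi_j(t)\,\overline{\chi}_j$ (column orthogonality).
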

\begin{proof}
Let $L$ be a set of representatives of the conjugacy classes in $\Gamma$. Since characters are class functions, we have 
\begin{equation}\label{NewEqCh4CgtComm}
\begin{split}
\chi_j(x)=\sum_{g\in L} \bigg( \sum_{s\in {\rm Cl}(g)} {\mathbf i} c_s\bigg) \chi_j(g) \textnormal{ for each } j\in \{1,\ldots, h\}.
\end{split}
\end{equation}
Assume that $\chi_j(x)$ is an integer for each $j\in \{1,\ldots, h\}$. 
Let $g_1,g_2\in \Gamma(4)$, $g_1 \approx g_2$ and $m=\ord(g_1)$. Therefore, there is $r\in G_m^1(1)$ and $\sigma_r \in {\rm Gal}(\mathbb{Q}(\omega_m)/\mathbb{Q}({\mathbf i}))$ such that $g_2=g_1^r$ and $\sigma_r(\omega_m)=\omega_m^r$. Note that $\sigma_r(\chi_j(g_1))= \chi_j(g_1^r)$ for each $j\in \{1,\ldots, h\}$. For $t\in \Gamma$, let $\theta_t=\sum\limits_{j=1}^{h} \chi_j(t) \overline{\chi}_j$, where $\overline{\chi}_j(g)=\overline{\chi_j(g)}$ for each $g\in \Gamma$. By Theorem~\ref{OrthoCharaRepr}, we have
\begin{align}
	\theta_t(u)= \left\{ \begin{array}{ll}
		|C_{\Gamma}(t)| & \mbox{ if $u$ and $t$ are conjugates to each other} \\
		0 &   \mbox{ otherwise}. 
	\end{array}\right.\nonumber
\end{align} 
So $\theta_t(x)= |C_{\Gamma}(t)| \sum\limits_{s\in {\rm Cl}(t)} {\mathbf i} c_s \in \mathbb{Q}(\mathbf i)$, and it gives that $\sigma_r(\theta_t(x))=\theta_t(x)$. Since $\chi_j(x)$ is assumed to be an integer, we have $\sigma_r(\chi_j(x))=\chi_j(x)$ for each $j\in \{1,\ldots, h\}$. Thus 
\begin{align}
|C_{\Gamma}(g_1)| \sum\limits_{s\in {\rm Cl}(g_1)}{\mathbf i}c_s=\theta_{g_1}(x)= \sigma_r(\theta_{g_1}(x)) &=\sum\limits_{j=1}^{h} \sigma_r(\chi_j(g_1)) \sigma_r(\overline{\chi}_j(x)) \nonumber\\
&=\sum\limits_{j=1}^{h} \chi_j(g_1^r) \overline{\chi}_j(x) \nonumber\\
&=\theta_{g_1^r}(x) =\theta_{g_2}(x)=|C_{\Gamma}(g_2)| \sum\limits_{s\in {\rm Cl}(g_2)}{\mathbf i}c_s. \label{ConjEqThetaEqua}
\end{align}
Since $g_1\approx g_2$, we have $C_{\Gamma}(g_1)=C_{\Gamma}(g_2)$. So Equation $(\ref{ConjEqThetaEqua})$ implies that $\sum\limits_{s\in {\rm Cl}(g_1)}c_s=\sum\limits_{s\in {\rm Cl}(g_2)}c_s$. Hence condition (i) holds. Again
\begin{align}
0=\chi_j(x)- \overline{\chi_j(x)}&=\sum_{g\in L} \bigg( \sum_{s\in {\rm Cl}(g)} {\mathbf i} c_s\bigg) \chi_j(g) - \sum_{g\in L} \bigg( \sum_{s\in {\rm Cl}(g)}- {\mathbf i} c_s\bigg) \overline{\chi_j(g)}\nonumber \\
&=\sum_{g\in L} \bigg( \sum_{s\in {\rm Cl}(g)} {\mathbf i} c_s\bigg) \chi_j(g) + \sum_{g\in L} \bigg( \sum_{s\in {\rm Cl}(g)} {\mathbf i} c_s\bigg) \chi_j(g^{-1})\nonumber\\
&=\sum_{g\in L} {\mathbf i} \bigg( \sum_{s\in {\rm Cl}(g)}  c_s + \sum_{s\in {\rm Cl}(g^{-1})}  c_s  \bigg) \chi_j(g),\nonumber
\end{align} and so 
\begin{align}
\sum_{g\in L} {\mathbf i} \bigg( \sum_{s\in {\rm Cl}(g)}  c_s + \sum_{s\in {\rm Cl}(g^{-1})}  c_s  \bigg) \begin{bmatrix}
			\chi_1(g) \\
						\vdots \\
			\chi_h(g)
		\end{bmatrix}=\begin{bmatrix}
			0 \\
			\vdots \\
			0
		\end{bmatrix}.\label{eqLIsumZero}
\end{align} 
Note that the number of irreducible characters of $\Gamma$ is equal to the number of conjugacy classes of $\Gamma$, that is, $|L|=h$. Since characters are class functions and the rank of $E$ is $h$, the columns of $E$ corresponding to the elements of $L$ are linearly independent. Thus by Equation $(\ref{eqLIsumZero})$, 
$$ \sum\limits_{s\in {\rm Cl}(g)}  c_s + \sum\limits_{s\in {\rm Cl}(g^{-1})}  c_s =0$$ for all $g\in L$, and so condition (ii) holds.

Let $g\in \Gamma \setminus \Gamma(4)$ and $m=\ord(g)$. Then there exists $\tau_{m-1} \in {\rm Gal}(\mathbb{Q}({\mathbf i},\omega_m)/\mathbb{Q}({\mathbf i}))$ such that \linebreak[4] $\tau_{m-1}(\omega_m)=\omega_m^{m-1}$. Note that $\tau_{m-1}(\chi_j(g))= \chi_j(g^{m-1})$ for each $j\in \{1,\ldots, h\}$. Now
\begin{align}
|C_{\Gamma}(g)| \sum\limits_{s\in {\rm Cl}(g)}{\mathbf i}c_s =\theta_{g}(x)
&= \tau_{m-1}(\theta_{g}(x))\nonumber\\
&=\sum\limits_{j=1}^{h} \tau_{m-1}(\chi_j(g)) \tau_{m-1}(\overline{\chi}_j(x)) \nonumber\\
&=\sum\limits_{j=1}^{h} \chi_j(g^{m-1}) \overline{\chi}_j(x) \nonumber\\
&=\theta_{g^{m-1}}(x)=\theta_{g^{-1}}(x)=|C_{\Gamma}(g^{-1})| \sum\limits_{s\in {\rm Cl}(g^{-1})}{\mathbf i}c_s. \label{ConjEqThetaEqua2}
\end{align} 
Since $C_{\Gamma}(g)=C_{\Gamma}(g^{-1})$, Equation $(\ref{ConjEqThetaEqua2})$ implies that $\sum\limits_{s\in {\rm Cl}(g)}c_s=\sum\limits_{s\in {\rm Cl}(g^{-1})}c_s$. This, together with condition (ii), gives $\sum\limits_{s\in {\rm Cl}(g)}c_s=0$ for all $g \in \Gamma \setminus \Gamma(4)$.  Hence condition (iii) also holds.

Conversely, assume that all the three conditions of the theorem hold. Let $n$ be the order of $\Gamma$. If $n\not\equiv 0\Mod 4$ then $\Gamma(4)=\emptyset$. Therefore by condition (iii) and Equation~(\ref{NewEqCh4CgtComm}), we have $\chi_j(x)=0$. Thus, $\chi_j(x)$ is an integer for each $j\in \{1,\ldots, h\}$. Now assume that $n\equiv 0 \Mod 4$. Let $L(4)$ be a set of representatives of the conjugacy classes of $\Gamma(4)$. Since characters are class functions, using condition (iii) we have 
\begin{align}
\chi_j(x)=\sum_{g\in L(4)} \bigg( \sum_{s\in {\rm Cl}(g)}{\mathbf i} c_s\bigg) \chi_j(g) \textnormal{ for each } j\in \{1,\ldots, h\}.\label{ConjEqThetaEqua5}
\end{align}
Let $\sigma_k \in {\rm Gal}(\mathbb{Q}({\mathbf i},\omega_n)/\mathbb{Q}({\mathbf i}))$. Therefore $\sigma_k(\omega_n)=\omega_n^k$ and $k\in G_n^1(1)$. Thus
\begin{align}
\sigma_k(\chi_j(x)) &= \sum_{g\in L(4)} \bigg( \sum_{s\in {\rm Cl}(g)} {\mathbf i} c_s\bigg) \sigma_k(\chi_j(g))\nonumber\\
&= \sum_{g\in L(4)} \bigg( \sum_{s\in {\rm Cl}(g)} {\mathbf i} c_s\bigg) \chi_j(g^k).\label{ConjEqThetaEqua3}
\end{align} Since $g\approx g^k$, by condition (i) we have $\sum\limits_{s\in {\rm Cl}(g)}c_s=\sum\limits_{s\in {\rm Cl}(g^k)}c_s$. From Equation $(\ref{ConjEqThetaEqua3})$, we get 
\begin{align}
\sigma_k(\chi_j(x)) &= \sum_{g\in L(4)} \bigg( \sum_{s\in {\rm Cl}(g^k)} {\mathbf i} c_s\bigg) \chi_j(g^k)=\chi_j(x). \label{neweq}
\end{align} 
The second equality in Equation $(\ref{neweq})$ holds, because $\{ g^k\colon g \in L(4)\}$ is also a set of representatives of conjugacy classes of $\Gamma(4)$. Now $\sigma_k(\chi_j(x)) = \chi_j(x)$ for each $k\in G_n^1(1)$, and so $\chi_j(x)\in \mathbb{Q}({\mathbf i})$. Taking complex conjugates in Equation $(\ref{ConjEqThetaEqua5})$, we have
\begin{align}
\overline{\chi_j(x)}=\sum_{g\in L(4)} \bigg( \sum_{s\in {\rm Cl}(g)} -{\mathbf i} c_s\bigg) \overline{\chi_j(g)} &= \sum_{g\in L(4)} \bigg( \sum_{s\in {\rm Cl}(g)} -{\mathbf i} c_s\bigg) \chi_j(g^{-1}) \nonumber\\
&= \sum_{g\in L(4)} \bigg( \sum_{s\in {\rm Cl}(g^{-1})} {\mathbf i} c_s\bigg) \chi_j(g^{-1})\nonumber\\
&= \chi_j(x). \label{ConjEqThetaEqua4}
\end{align}
Thus Equation $(\ref{ConjEqThetaEqua4})$ implies that $\chi_j(x)\in \mathbb{Q}$. As $\chi_j(x)$ is a rational algebraic integer, it must be an integer for each $j\in \{1,\ldots, h\}$.
\end{proof}

Indeed, we can replace condition (i) of Theorem \ref{MainTheoremIntChara} by $\sum\limits_{s\in {\rm Cl}(x)}c_s=\sum\limits_{s\in {\rm Cl}(y)}c_s$ for all $x,y \in \llbracket g \rrbracket$ and $g \in \Gamma(4)$.

\begin{theorem}\label{NorOriCayGraphInteg}
Let $\Gamma$ be a finite group and ${\rm Cay}(\Gamma, S)$ be a normal oriented Cayley graph. Then ${\rm Cay}(\Gamma, S)$ is H-integral if and only if $S \in \mathbb{D}(\Gamma)$.
\end{theorem}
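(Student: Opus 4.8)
The plan is to reduce H-integrality to the character-sum integrality condition of Theorem~\ref{MainTheoremIntChara} by encoding the skew-symmetric set $S$ as a group-algebra element. Since $S$ is skew-symmetric, $S$ and $S^{-1}$ are disjoint, so I would set $c_g=1$ for $g\in S$, $c_g=-1$ for $g\in S^{-1}$, and $c_g=0$ otherwise, and put $x=\sum_{g\in\Gamma}ic_g g\in\mathbb{Q}(i)\Gamma$. With this choice $\chi_j(x)=i\sum_{s\in S}(\chi_j(s)-\chi_j(s^{-1}))$, so Corollary~\ref{coro2EigNorCayMix} gives exactly $\mu_j=\chi_j(x)/\chi_j(1)$ for the H-eigenvalues of $\text{Cay}(\Gamma,S)$.

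First I would show that H-integrality of $\text{Cay}(\Gamma,S)$ is equivalent to $\chi_j(x)$ being an integer for every $j$. The forward direction is immediate: if every $\mu_j\in\mathbb{Z}$ then $\chi_j(x)=\mu_j\chi_j(1)$ is a product of integers. For the converse I would use that each $\mu_j$, being an eigenvalue of the Hermitian matrix $\mathcal{H}(\text{Cay}(\Gamma,S))$ whose entries lie in $\{0,1,i,-i\}$, is a real algebraic integer; hence if $\chi_j(x)\in\mathbb{Z}$ then $\mu_j=\chi_j(x)/\chi_j(1)\in\mathbb{Q}$ is a rational algebraic integer, forcing $\mu_j\in\mathbb{Z}$. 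This lets me invoke Theorem~\ref{MainTheoremIntChara}: $\text{Cay}(\Gamma,S)$ is H-integral if and only if conditions $(i)$--$(iii)$ hold for this $x$.

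It then remains to show that, for this particular $c$, conditions $(i)$--$(iii)$ are equivalent to $S\in\mathbb{D}(\Gamma)$. Because $S$ is a union of conjugacy classes and is skew-symmetric, for any $g$ the sum $\sum_{s\in Cl(g)}c_s$ equals $|Cl(g)|$, $-|Cl(g)|$, or $0$ according as $Cl(g)\subseteq S$, $Cl(g)\subseteq S^{-1}$, or $Cl(g)\cap(S\cup S^{-1})=\emptyset$; a short case check using $Cl(g^{-1})=Cl(g)^{-1}$ shows condition $(ii)$ holds automatically. Condition $(iii)$ says $\sum_{s\in Cl(g)}c_s=0$ for every $g\notin\Gamma(4)$, which, since $\Gamma(4)$ is closed under inversion, is precisely the statement $S\subseteq\Gamma(4)$. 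For condition $(i)$ I would argue both inclusions. If $(i)$ holds and $g\in S$, then by $(iii)$ we have $g\in\Gamma(4)$ and any $g'\approx g$ also lies in $\Gamma(4)$; condition $(i)$ then gives $\sum_{s\in Cl(g')}c_s=\sum_{s\in Cl(g)}c_s=|Cl(g)|>0$, which forces $Cl(g')\subseteq S$ and hence $\llbracket g\rrbracket\subseteq S$, so $S$ is a skew-symmetric union of $\approx$-classes inside $\Gamma(4)$, i.e. $S\in\mathbb{D}(\Gamma)$. Conversely, for $S=\llbracket x_1\rrbracket\cup\cdots\cup\llbracket x_k\rrbracket$ I would record that $\llbracket x\rrbracket^{-1}=\llbracket x^{-1}\rrbracket$ (so $S^{-1}$ is also a union of $\approx$-classes) and that $g_1\approx g_2$ implies $C_\Gamma(g_1)=C_\Gamma(g_2)$, hence $|Cl(g_1)|=|Cl(g_2)|$; these two facts guarantee that $g_1$ and $g_2$ lie in $S$, in $S^{-1}$, or in neither simultaneously, and with equal conjugacy-class sizes, so the sums in condition $(i)$ agree.

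The main obstacle I anticipate is the translation of condition $(i)$ into set membership: one must combine the sign/positivity argument (the $c$-sum over a conjugacy class unambiguously determines whether that class lies in $S$, in $S^{-1}$, or in neither) with the two structural facts that $S^{-1}$ is again a union of $\approx$-classes and that conjugacy-class sizes are constant along an $\approx$-class. The verification of condition $(ii)$ and the identification of $(iii)$ with $S\subseteq\Gamma(4)$ are routine once the $\pm 1$ encoding is in place, and the degenerate case $\Gamma(4)=\emptyset$ is subsumed, since then $(iii)$ forces $S=\emptyset$ and $\mathbb{D}(\Gamma)=\{\emptyset\}$.
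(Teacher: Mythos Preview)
Your proposal is correct and follows essentially the same route as the paper: encode $S$ via $c_g\in\{1,-1,0\}$, identify $\mu_j=\chi_j(x)/\chi_j(1)$ by Corollary~\ref{coro2EigNorCayMix}, reduce H-integrality to integrality of all $\chi_j(x)$ via the rational-algebraic-integer argument, and then translate conditions $(i)$--$(iii)$ of Theorem~\ref{MainTheoremIntChara} into $S\in\mathbb{D}(\Gamma)$ using that the class sums take only the values $|Cl(g)|,-|Cl(g)|,0$. The only cosmetic differences are that you make condition $(ii)$ and the degenerate case $\Gamma(4)=\emptyset$ explicit, and you invoke $C_\Gamma(g_1)=C_\Gamma(g_2)$ directly to get equal class sizes, whereas the paper states $|Cl(g_1)|=|Cl(g_2)|$ without comment.
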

\begin{proof}
Let ${\rm Irr}(\Gamma)=\{ \chi_1,\ldots,\chi_h \}$ and $x=\sum\limits_{g\in \Gamma} {\mathbf i} c_g g$, where 
$$c_g= \left\{ \begin{array}{rl}
		1 & \mbox{if } g\in S  \\
		-1 & \mbox{if } g\in S^{-1}\\
		0 &   \mbox{otherwise}. 
	\end{array}\right.$$ 
Observe that $\chi_j(x)= \sum\limits_{s\in S} {\mathbf i} (\chi_j(s)-\chi_j(s^{-1}))$, and so $\frac{\chi_j(x)}{\chi_j({\mathbf 1})}$ is an H-eigenvalue of ${\rm Cay}(\Gamma, S)$. Assume that the normal oriented Cayley graph ${\rm Cay}(\Gamma, S)$ is H-integral.  Thus $\chi_j(x)$ is an integer for each $j\in \{1,\ldots, h\}$, and therefore all the three conditions of Theorem \ref{MainTheoremIntChara} are satisfied for $x$. By the third condition of Theorem \ref{MainTheoremIntChara}, we get $\sum\limits_{s\in {\rm Cl}(g)}c_s=0$ for all $g \in \Gamma\setminus \Gamma (4)$. Note that $S$ is a union of some conjugacy classes of $\Gamma$. Therefore, if $g\in S$ then ${\rm Cl}(g)\subseteq S$, and so by the definition of $c_g$ we get $\sum\limits_{s\in {\rm Cl}(g)}c_s=|{\rm Cl}(g)|\neq 0$. Thus $S \cap (\Gamma \setminus \Gamma(4))=\emptyset$, that is, $S\subseteq \Gamma(4)$. Again, let  $g_1 \in S$, $g_2 \in \Gamma(4)$ and $ g_1 \approx g_2$. By the first condition of Theorem \ref{MainTheoremIntChara}, we get $0 < \sum\limits_{s\in {\rm Cl}(g_1)}c_s=\sum\limits_{s\in {\rm Cl}(g_2)}c_s$, which implies that $g_2\in S$. Thus $g_1 \in S$ gives $\llbracket g_1 \rrbracket \subseteq S$. Hence $S\in \mathbb{D}(\Gamma)$. 
	
	Conversely, assume that $S\in \mathbb{D}(\Gamma)$. Let ${\rm Cay}(\Gamma, S)$ be a normal oriented Cayley graph, so that $S$ is a union of some conjugacy classes of $\Gamma$. Let $$S= \llbracket x_1 \rrbracket\cup\cdots\cup \llbracket x_r \rrbracket = {\rm Cl}(y_1) \cup \cdots \cup {\rm Cl}(y_k) \subseteq \Gamma(4)$$ for some $x_1,\ldots,x_r,y_1,\ldots,y_k\in \Gamma(4)$. 
	Therefore $$S^{-1}= \llbracket x_1^{-1} \rrbracket\cup\cdots\cup \llbracket x_r^{-1} \rrbracket = {\rm Cl}(y_1^{-1}) \cup \cdots \cup {\rm Cl}(y_k^{-1}) \subseteq \Gamma(4).$$ Now for $g_1,g_2\in \Gamma(4)$, if $g_1\approx g_2$ then ${\rm Cl}(g_1),{\rm Cl}(g_2) \subseteq S$ or ${\rm Cl}(g_1),{\rm Cl}(g_2) \subseteq S^{-1}$ or \linebreak[4] ${\rm Cl}(g_1),{\rm Cl}(g_2) \subseteq (S \cup S^{-1})^{c}$. Here $(S \cup S^{-1})^{c}$ is the complement of $S \cup S^{-1}$ in $\Gamma$. Note that \linebreak[4] $|{\rm Cl}(g_1)|=|{\rm Cl}(g_2)|$. For all the cases, using the definition of $c_g$, we have $$\sum_{s\in {\rm Cl}(g_1)}c_s=\sum_{s\in {\rm Cl}(g_2)}c_s.$$  
	Again, we see that ${\rm Cl}(g) \subseteq S$ if and only if ${\rm Cl}(g^{-1}) \subseteq S^{-1}$. Therefore $$\text{$\sum\limits_{s\in {\rm Cl}(g)}c_s=-\sum\limits_{s\in {\rm Cl}(g^{-1})}c_s=0$ \hspace{0.5cm} or, \hspace{0.5cm} $\sum\limits_{s\in {\rm Cl}(g)}c_s= \pm |{\rm Cl}(g)|=-\sum\limits_{s\in {\rm Cl}(g^{-1})}c_s$.}$$ Further, if $g \not\in \Gamma(4)$ then ${\rm Cl}(g) \cap (S \cup S^{-1}) = \emptyset$, and so $\sum\limits_{s\in {\rm Cl}(g)}c_s=0$. Thus the three conditions of Theorem \ref{MainTheoremIntChara} are satisfied, and therefore $\chi_j(x)$ is an integer for each $j\in \{1,\ldots, h\}$. Consequently, the H-eigenvalue $\mu_j := \frac{\chi_j(x)}{\chi_j({\mathbf 1})}$ of ${\rm Cay}(\Gamma, S)$ is a rational algebraic integer, and hence it an integer for each $j\in \{1,\ldots, h\}$.
\end{proof}

We give the following example to illustrate Theorem~\ref{NorOriCayGraphInteg}.

\begin{ex}\label{ex1} \normalfont Consider the group  $M_{16}:= \langle a,x~|~ a^8=x^2={\mathbf 1}, xax^{-1}=a^5  \rangle $, and let $S=\{ a,a^5, a^3x, a^7x\}$. The conjugacy classes of $M_{16}$ are $\{{\mathbf 1}\}$,$\{a^4\}$,$\{a^2\}$,$\{a^6\}$,$\{a, a^5\}$, $\{a^3, a^7\}$, $\{ax, a^5x\}$, $\{a^3x, a^7x\}$, $\{x, a^4x\}$ and $\{a^2x, a^6x\}$. The normal oriented Cayley graph ${\rm Cay}(M_{16}, S)$ is shown in Figure~\ref{aaa}. We see that $S=\llbracket a \rrbracket \cup \llbracket a^3x \rrbracket={\rm Cl}(a)\cup {\rm Cl}(a^3x)$. Thus  $S \in \mathbb{D}(M_{16})$, and hence ${\rm Cay}(M_{16}, S)$ is H-integral. We can also confirm this by finding its  H-eigenvalues. Using the GAP software, the character table of $M_{16}$ is obtained and given in Table~\ref{table1}, where ${\rm Irr}(M_{16})=\{ \chi_1,\ldots, \chi_{10} \}$. Further, using Corollary~\ref{coro2EigNorCayMix1}, the H-spectrum of  ${\rm Cay}(M_{16}, S)$ is obtained as $\{ [\mu_{j}]^{1} \colon 1 \leq j \leq 8\}\cup \{[\mu_{9}]^{4},[\mu_{10}]^{4}\},$ where $\mu_j=0$ for $j \not\in \{5,6\}$, $\mu_5=-8$ and $\mu_6=8$. Thus all the H-eigenvalues of ${\rm Cay}(M_{16}, S)$ are integers. 
\end{ex}

\begin{theorem}\label{NormCayIntegChara}
Let $\Gamma$ be a finite group and ${\rm Cay}(\Gamma, S)$ be a normal mixed Cayley graph. Then ${\rm Cay}(\Gamma, S)$ is H-integral if and only if $S\setminus \overline{S} \in \mathbb{B}(\Gamma)$ and $\overline{S} \in \mathbb{D}(\Gamma)$.
\end{theorem}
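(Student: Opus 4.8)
The plan is to obtain the statement as a direct synthesis of Lemma~\ref{SeprateInteg} together with the two characterizations already established, namely Theorem~\ref{NorMixCayGraphInteg} for the symmetric (simple) part and Theorem~\ref{NorOriCayGraphInteg} for the skew-symmetric (oriented) part. Lemma~\ref{SeprateInteg} reduces H-integrality of $\text{Cay}(\Gamma, S)$ to the conjunction of two conditions: integrality of $\text{Cay}(\Gamma, S\setminus\overline{S})$ and H-integrality of $\text{Cay}(\Gamma, \overline{S})$. Since $S\setminus\overline{S}$ is symmetric and $\overline{S}$ is skew-symmetric (as recorded in the introduction), these two auxiliary graphs are a simple Cayley graph and an oriented Cayley graph respectively. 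Thus the only remaining task is to verify that both auxiliary graphs are \emph{normal}, so that the cited theorems may be invoked.

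First I would check that the normality of $\text{Cay}(\Gamma, S)$ is inherited by both pieces, i.e. that $S\setminus\overline{S}$ and $\overline{S}$ are each unions of conjugacy classes of $\Gamma$. Suppose $u\in\overline{S}$ and $v\in Cl(u)$, say $v=x^{-1}ux$. Because $S$ is a union of conjugacy classes and $u\in S$, we have $v\in S$. Moreover $v^{-1}=x^{-1}u^{-1}x\in Cl(u^{-1})$, and if $v^{-1}\in S$ then the whole class $Cl(u^{-1})$ lies in $S$, forcing $u^{-1}\in S$, contrary to $u\in\overline{S}$. Hence $v^{-1}\notin S$, so $v\in\overline{S}$; thus $\overline{S}$ is a union of conjugacy classes, and consequently so is $S\setminus\overline{S}$. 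This makes $\text{Cay}(\Gamma, S\setminus\overline{S})$ a normal simple Cayley graph and $\text{Cay}(\Gamma, \overline{S})$ a normal oriented Cayley graph.

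With normality in hand the chain of equivalences closes immediately: $\text{Cay}(\Gamma, S)$ is H-integral if and only if, by Lemma~\ref{SeprateInteg}, $\text{Cay}(\Gamma, S\setminus\overline{S})$ is integral and $\text{Cay}(\Gamma, \overline{S})$ is H-integral, which by Theorem~\ref{NorMixCayGraphInteg} and Theorem~\ref{NorOriCayGraphInteg} holds if and only if $S\setminus\overline{S}\in\mathbb{B}(\Gamma)$ and $\overline{S}\in\mathbb{D}(\Gamma)$. I do not anticipate a genuine obstacle here, since the substantive content is entirely carried by the earlier results; the one point demanding care is precisely the normality bookkeeping above, ensuring that the decomposition $S=(S\setminus\overline{S})\cup\overline{S}$ splits $S$ into two disjoint pieces to which the respective prior theorems legitimately apply.
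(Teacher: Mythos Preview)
Your proposal is correct and follows exactly the paper's own argument: invoke Lemma~\ref{SeprateInteg} to split H-integrality of $\text{Cay}(\Gamma,S)$ into integrality of $\text{Cay}(\Gamma,S\setminus\overline{S})$ and H-integrality of $\text{Cay}(\Gamma,\overline{S})$, and then apply Theorem~\ref{NorMixCayGraphInteg} and Theorem~\ref{NorOriCayGraphInteg} respectively. Your explicit verification that $\overline{S}$ and $S\setminus\overline{S}$ inherit normality is a welcome bit of bookkeeping that the paper leaves implicit.
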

\begin{proof}
By Lemma \ref{SeprateInteg}, ${\rm Cay}(\Gamma, S)$ is H-integral if and only if ${\rm Cay}(\Gamma, S \setminus \overline{S})$ is integral and ${\rm Cay}(\Gamma, \overline{S})$ is H-integral. Now the proof follows from Theorem~\ref{NorMixCayGraphInteg} and Theorem~\ref{NorOriCayGraphInteg}.
\end{proof}

The following example uses Theorem~\ref{NormCayIntegChara} to check H-integrality of a normal mixed Cayley graph.

\begin{ex}\label{example2} \normalfont Consider the group $M_{16}$ of Example~\ref{ex1} and $S=\{ a,a^3,a^5,a^7, a^3x, a^7x\}$. The normal mixed Cayley graph ${\rm Cay}(M_{16}, S)$ is shown in Figure~\ref{bbb}. We have $$S=[ a ] \cup \llbracket a^3x \rrbracket={\rm Cl}(a)\cup {\rm Cl}(a^3) \cup {\rm Cl}(a^3x).$$ Therefore $S \setminus \overline{S} \in \mathbb{B}(M_{16})$ and $\overline{S} \in \mathbb{D}(M_{16})$. Further, using Lemma~\ref{EigNorCayMix1}, the H-spectrum of  ${\rm Cay}(M_{16}, S)$ is obtained as 
$\{ [\gamma_{j}]^{1} \colon 1 \leq j \leq 8\}\cup \{[\gamma_{9}]^{4},[\gamma_{10}]^{4}\},$ where $\gamma_{1}=\gamma_{3}=\gamma_{6}=\gamma_{7}=4$, $\gamma_{2}=\gamma_{4}=\gamma_{5}=\gamma_{8}=-4$ and $\gamma_9=\gamma_{10}=0$. Thus ${\rm Cay}(M_{16}, S)$ is H-integral. 
\end{ex}

\begin{table}[ht]
\centering
\resizebox{\textwidth}{!}{
\begin{tabular}{ c c c c c c c c c c c }
	\hline
	& $\{{\mathbf 1}\}$ & $\{a^4\}$ &$\{a^2\}$ & $\{a^6\}$ & $\{a, a^5\}$ &$\{a^3, a^7\}$ &$\{ax, a^5x\}$ & $\{a^3x, a^7x\}$ & $\{x, a^4x\}$& $\{a^2x, a^6x\}$ \\
	\hline
	$\chi_1$ &$1$ &$1$ &$1$ &$1$ &$1$ &$1$ &$1$ &$1$ &$1$ &$1$ \\
	$\chi_2$ &$1$ &$1$ &$1$ &$1$ &$-1~~$ &$-1~~$ &$-1~~$ &$-1~~$&$1$ &$1$\\
	$\chi_3$ &$1$ &$1$ &$1$ &$1$ &$1$ &$1$ &$-1~~$ &$-1~~$&$-1~~$ &$-1~~$\\
	$\chi_4$ &$1$ &$1$ &$1$ &$1$ &$-1~~$ &$-1~~$ &$1$ &$1$ &$-1~~$ &$-1~~$\\
	$\chi_5$ &$1$ &$1$ &$-1~~$ &$-1~~$ &${\mathbf i}$ &$-{\mathbf i}~~$ &$-{\mathbf i}~~$ &${\mathbf i}$ &$-1~~$ &$1$ \\
	$\chi_6$ &$1$ &$1$ &$-1~~$ &$-1~~$ &$-{\mathbf i}~~$ &${\mathbf i}$ &${\mathbf i}$ &$-{\mathbf i}~~$ &$-1~~$ &$1$ \\
	$\chi_7$ &$1$ &$1$ &$-1~~$ &$-1~~$ &${\mathbf i}$ &$-{\mathbf i}~~$ &${\mathbf i}$ &$-{\mathbf i}~~$ &$1$ &$-1~~$ \\
	$\chi_8$ &$1$ &$1$ &$-1~~$ &$-1~~$ &$-{\mathbf i}~~$ &${\mathbf i}$ &$-{\mathbf i}~~$ & ${\mathbf i}$ &$1$ &$-1~~$ \\
	$\chi_9$ &$2$ &$-2~~$ &$2{\mathbf i}~$ &$-2{\mathbf i}~~~$ &$0$ &$0$ &$0$ &$0$ &$0$ &$0$\\
	$\chi_{10}$ &$2$ & $-2~~$ &$-2{\mathbf i}~~~$ &$2{\mathbf i}~$ &$0$ &$0$ &$0$ &$0$ &$0$ &$0$\\
	\hline
\end{tabular}}
\caption{Character table of $M_{16}$}\label{table1}
\end{table}



\begin{figure}[h]
\centering
\hfill
\begin{subfigure}{0.45\textwidth}

\tikzset{every picture/.style={line width=0.75pt}} 

\begin{tikzpicture}[x=0.2pt,y=0.2pt,yscale=-1,xscale=1]

\draw  [color={rgb, 255:red, 0; green, 0; blue, 0 }  ,draw opacity=1 ][line width=0.75]  (487.6,100.86) .. controls (487.6,93.84) and (493.3,88.15) .. (500.32,88.15) .. controls (507.35,88.15) and (513.04,93.84) .. (513.04,100.86) .. controls (513.04,107.89) and (507.35,113.58) .. (500.32,113.58) .. controls (493.3,113.58) and (487.6,107.89) .. (487.6,100.86) -- cycle ;
\draw  [color={rgb, 255:red, 0; green, 0; blue, 0 }  ,draw opacity=1 ][line width=0.75]  (641.6,131.86) .. controls (641.6,124.84) and (647.3,119.15) .. (654.32,119.15) .. controls (661.35,119.15) and (667.04,124.84) .. (667.04,131.86) .. controls (667.04,138.89) and (661.35,144.58) .. (654.32,144.58) .. controls (647.3,144.58) and (641.6,138.89) .. (641.6,131.86) -- cycle ;
\draw  [color={rgb, 255:red, 0; green, 0; blue, 0 }  ,draw opacity=1 ][line width=0.75]  (770.6,217.86) .. controls (770.6,210.84) and (776.3,205.15) .. (783.32,205.15) .. controls (790.35,205.15) and (796.04,210.84) .. (796.04,217.86) .. controls (796.04,224.89) and (790.35,230.58) .. (783.32,230.58) .. controls (776.3,230.58) and (770.6,224.89) .. (770.6,217.86) -- cycle ;
\draw  [color={rgb, 255:red, 0; green, 0; blue, 0 }  ,draw opacity=1 ][line width=0.75]  (204.6,218.86) .. controls (204.6,211.84) and (210.3,206.15) .. (217.32,206.15) .. controls (224.35,206.15) and (230.04,211.84) .. (230.04,218.86) .. controls (230.04,225.89) and (224.35,231.58) .. (217.32,231.58) .. controls (210.3,231.58) and (204.6,225.89) .. (204.6,218.86) -- cycle ;
\draw  [color={rgb, 255:red, 0; green, 0; blue, 0 }  ,draw opacity=1 ][line width=0.75]  (119.6,346.86) .. controls (119.6,339.84) and (125.3,334.15) .. (132.32,334.15) .. controls (139.35,334.15) and (145.04,339.84) .. (145.04,346.86) .. controls (145.04,353.89) and (139.35,359.58) .. (132.32,359.58) .. controls (125.3,359.58) and (119.6,353.89) .. (119.6,346.86) -- cycle ;
\draw  [color={rgb, 255:red, 0; green, 0; blue, 0 }  ,draw opacity=1 ][line width=0.75]  (87.6,500.86) .. controls (87.6,493.84) and (93.3,488.15) .. (100.32,488.15) .. controls (107.35,488.15) and (113.04,493.84) .. (113.04,500.86) .. controls (113.04,507.89) and (107.35,513.58) .. (100.32,513.58) .. controls (93.3,513.58) and (87.6,507.89) .. (87.6,500.86) -- cycle ;
\draw  [color={rgb, 255:red, 0; green, 0; blue, 0 }  ,draw opacity=1 ][line width=0.75]  (205.6,782.86) .. controls (205.6,775.84) and (211.3,770.15) .. (218.32,770.15) .. controls (225.35,770.15) and (231.04,775.84) .. (231.04,782.86) .. controls (231.04,789.89) and (225.35,795.58) .. (218.32,795.58) .. controls (211.3,795.58) and (205.6,789.89) .. (205.6,782.86) -- cycle ;
\draw  [color={rgb, 255:red, 0; green, 0; blue, 0 }  ,draw opacity=1 ][line width=0.75]  (333.6,869.86) .. controls (333.6,862.84) and (339.3,857.15) .. (346.32,857.15) .. controls (353.35,857.15) and (359.04,862.84) .. (359.04,869.86) .. controls (359.04,876.89) and (353.35,882.58) .. (346.32,882.58) .. controls (339.3,882.58) and (333.6,876.89) .. (333.6,869.86) -- cycle ;
\draw  [color={rgb, 255:red, 0; green, 0; blue, 0 }  ,draw opacity=1 ][line width=0.75]  (488.6,899.86) .. controls (488.6,892.84) and (494.3,887.15) .. (501.32,887.15) .. controls (508.35,887.15) and (514.04,892.84) .. (514.04,899.86) .. controls (514.04,906.89) and (508.35,912.58) .. (501.32,912.58) .. controls (494.3,912.58) and (488.6,906.89) .. (488.6,899.86) -- cycle ;
\draw  [color={rgb, 255:red, 0; green, 0; blue, 0 }  ,draw opacity=1 ][line width=0.75]  (770.6,782.86) .. controls (770.6,775.84) and (776.3,770.15) .. (783.32,770.15) .. controls (790.35,770.15) and (796.04,775.84) .. (796.04,782.86) .. controls (796.04,789.89) and (790.35,795.58) .. (783.32,795.58) .. controls (776.3,795.58) and (770.6,789.89) .. (770.6,782.86) -- cycle ;
\draw  [color={rgb, 255:red, 0; green, 0; blue, 0 }  ,draw opacity=1 ][line width=0.75]  (857.6,653.86) .. controls (857.6,646.84) and (863.3,641.15) .. (870.32,641.15) .. controls (877.35,641.15) and (883.04,646.84) .. (883.04,653.86) .. controls (883.04,660.89) and (877.35,666.58) .. (870.32,666.58) .. controls (863.3,666.58) and (857.6,660.89) .. (857.6,653.86) -- cycle ;
\draw  [color={rgb, 255:red, 0; green, 0; blue, 0 }  ,draw opacity=1 ][line width=0.75]  (886.6,499.86) .. controls (886.6,492.84) and (892.3,487.15) .. (899.32,487.15) .. controls (906.35,487.15) and (912.04,492.84) .. (912.04,499.86) .. controls (912.04,506.89) and (906.35,512.58) .. (899.32,512.58) .. controls (892.3,512.58) and (886.6,506.89) .. (886.6,499.86) -- cycle ;
\draw  [color={rgb, 255:red, 0; green, 0; blue, 0 }  ,draw opacity=1 ][line width=0.75]  (335.6,130.86) .. controls (335.6,123.84) and (341.3,118.15) .. (348.32,118.15) .. controls (355.35,118.15) and (361.04,123.84) .. (361.04,130.86) .. controls (361.04,137.89) and (355.35,143.58) .. (348.32,143.58) .. controls (341.3,143.58) and (335.6,137.89) .. (335.6,130.86) -- cycle ;
\draw  [color={rgb, 255:red, 0; green, 0; blue, 0 }  ,draw opacity=1 ][line width=0.75]  (641.6,869.86) .. controls (641.6,862.84) and (647.3,857.15) .. (654.32,857.15) .. controls (661.35,857.15) and (667.04,862.84) .. (667.04,869.86) .. controls (667.04,876.89) and (661.35,882.58) .. (654.32,882.58) .. controls (647.3,882.58) and (641.6,876.89) .. (641.6,869.86) -- cycle ;
\draw  [color={rgb, 255:red, 0; green, 0; blue, 0 }  ,draw opacity=1 ][line width=0.75]  (119.6,653.86) .. controls (119.6,646.84) and (125.3,641.15) .. (132.32,641.15) .. controls (139.35,641.15) and (145.04,646.84) .. (145.04,653.86) .. controls (145.04,660.89) and (139.35,666.58) .. (132.32,666.58) .. controls (125.3,666.58) and (119.6,660.89) .. (119.6,653.86) -- cycle ;
\draw  [color={rgb, 255:red, 0; green, 0; blue, 0 }  ,draw opacity=1 ][line width=0.75]  (857.6,347.86) .. controls (857.6,340.84) and (863.3,335.15) .. (870.32,335.15) .. controls (877.35,335.15) and (883.04,340.84) .. (883.04,347.86) .. controls (883.04,354.89) and (877.35,360.58) .. (870.32,360.58) .. controls (863.3,360.58) and (857.6,354.89) .. (857.6,347.86) -- cycle ;
\draw    (509,111.17) -- (890.58,491.42) ;
\draw    (509,111.17) -- (862.58,645.42) ;
\draw    (509,111.17) -- (783.32,770.15) ;
\draw    (509,111.17) -- (654.32,857.15) ;
\draw    (660.58,143.42) -- (890.58,491.42) ;
\draw    (660.58,143.42) -- (862.58,645.42) ;
\draw    (660.58,143.42) -- (783.32,770.15) ;
\draw    (660.58,143.42) -- (654.32,857.15) ;
\draw    (783.32,230.58) -- (890.58,491.42) ;
\draw    (783.32,230.58) -- (862.58,645.42) ;
\draw    (783.32,230.58) -- (783.32,770.15) ;
\draw    (783.32,230.58) -- (654.32,857.15) ;
\draw    (864.58,359.42) -- (890.58,491.42) ;
\draw    (864.58,359.42) -- (862.58,645.42) ;
\draw    (864.58,359.42) -- (783.32,770.15) ;
\draw    (864.58,359.42) -- (654.32,857.15) ;
\draw    (496.71,888.52) -- (108.58,510.42) ;
\draw    (496.71,888.52) -- (138.47,357.38) ;
\draw    (496.71,888.52) -- (216.64,231.96) ;
\draw    (496.71,888.52) -- (344.88,143.84) ;
\draw    (344.85,857.6) -- (108.58,510.42) ;
\draw    (344.85,857.6) -- (138.47,357.38) ;
\draw    (344.85,857.6) -- (216.64,231.96) ;
\draw    (344.85,857.6) -- (344.88,143.84) ;
\draw    (221.35,771.5) -- (108.58,510.42) ;
\draw    (221.35,771.5) -- (138.47,357.38) ;
\draw    (221.35,771.5) -- (216.64,231.96) ;
\draw    (221.35,771.5) -- (344.88,143.84) ;
\draw    (138.97,643.38) -- (108.58,510.42) ;
\draw    (138.97,643.38) -- (138.47,357.38) ;
\draw    (138.97,643.38) -- (216.64,231.96) ;
\draw    (138.97,643.38) -- (344.88,143.84) ;
\draw    (890.07,508.46) -- (510.56,890.78) ;
\draw    (890.07,508.46) -- (356.5,863.07) ;
\draw    (890.07,508.46) -- (231.62,784.05) ;
\draw    (890.07,508.46) -- (144.38,655.22) ;
\draw    (858.12,660.11) -- (510.56,890.78) ;
\draw    (858.12,660.11) -- (356.5,863.07) ;
\draw    (858.12,660.11) -- (231.62,784.05) ;
\draw    (858.12,660.11) -- (144.38,655.22) ;
\draw    (771.18,783.01) -- (510.56,890.78) ;
\draw    (771.18,783.01) -- (356.5,863.07) ;
\draw    (771.18,783.01) -- (231.62,784.05) ;
\draw    (771.18,783.01) -- (144.38,655.22) ;
\draw    (642.51,864.52) -- (510.56,890.78) ;
\draw    (642.51,864.52) -- (356.5,863.07) ;
\draw    (642.51,864.52) -- (231.62,784.05) ;
\draw    (642.51,864.52) -- (144.38,655.22) ;
\draw    (111.88,493.69) -- (491.27,111.24) ;
\draw    (111.88,493.69) -- (645.33,138.89) ;
\draw    (111.88,493.69) -- (770.24,217.87) ;
\draw    (111.88,493.69) -- (857.53,346.67) ;
\draw    (143.79,342.03) -- (491.27,111.24) ;
\draw    (143.79,342.03) -- (645.33,138.89) ;
\draw    (143.79,342.03) -- (770.24,217.87) ;
\draw    (143.79,342.03) -- (857.53,346.67) ;
\draw    (230.68,219.1) -- (491.27,111.24) ;
\draw    (230.68,219.1) -- (645.33,138.89) ;
\draw    (230.68,219.1) -- (770.24,217.87) ;
\draw    (230.68,219.1) -- (857.53,346.67) ;
\draw    (359.33,137.54) -- (491.27,111.24) ;
\draw    (359.33,137.54) -- (645.33,138.89) ;
\draw    (359.33,137.54) -- (770.24,217.87) ;
\draw    (359.33,137.54) -- (857.53,346.67) ;
\draw  [fill={rgb, 255:red, 0; green, 0; blue, 0 }  ,fill opacity=1 ] (539.01,239.18) -- (537.41,255.29) -- (529.92,240.94) -- (535.93,247.68) -- cycle ;
\draw  [fill={rgb, 255:red, 0; green, 0; blue, 0 }  ,fill opacity=1 ] (570.59,195.65) -- (575.16,211.19) -- (562.83,200.68) -- (570.93,204.68) -- cycle ;
\draw  [fill={rgb, 255:red, 0; green, 0; blue, 0 }  ,fill opacity=1 ] (563.3,228.75) -- (564.85,244.87) -- (554.73,232.23) -- (561.93,237.68) -- cycle ;
\draw  [fill={rgb, 255:red, 0; green, 0; blue, 0 }  ,fill opacity=1 ] (663.55,281.91) -- (658.95,297.44) -- (654.29,281.92) -- (658.93,289.68) -- cycle ;
\draw  [fill={rgb, 255:red, 0; green, 0; blue, 0 }  ,fill opacity=1 ] (788.67,406.98) -- (783.83,422.43) -- (779.41,406.85) -- (783.93,414.68) -- cycle ;
\draw  [fill={rgb, 255:red, 0; green, 0; blue, 0 }  ,fill opacity=1 ] (701.1,324.21) -- (699.32,340.31) -- (692,325.87) -- (697.93,332.68) -- cycle ;
\draw  [fill={rgb, 255:red, 0; green, 0; blue, 0 }  ,fill opacity=1 ] (599.59,194.95) -- (607.5,209.08) -- (593.14,201.59) -- (601.93,203.68) -- cycle ;
\draw  [fill={rgb, 255:red, 0; green, 0; blue, 0 }  ,fill opacity=1 ] (750.06,417.03) -- (742.34,431.27) -- (741,415.13) -- (743.93,423.68) -- cycle ;
\draw  [fill={rgb, 255:red, 0; green, 0; blue, 0 }  ,fill opacity=1 ] (765.5,537.56) -- (749.35,536.31) -- (763.55,528.52) -- (756.93,534.68) -- cycle ;
\draw  [fill={rgb, 255:red, 0; green, 0; blue, 0 }  ,fill opacity=1 ] (818.44,388.35) -- (816.01,404.36) -- (809.28,389.63) -- (814.93,396.68) -- cycle ;
\draw  [fill={rgb, 255:red, 0; green, 0; blue, 0 }  ,fill opacity=1 ] (815.42,295.76) -- (816.76,311.9) -- (806.8,299.13) -- (813.93,304.68) -- cycle ;
\draw  [fill={rgb, 255:red, 0; green, 0; blue, 0 }  ,fill opacity=1 ] (744.59,261.65) -- (749.16,277.19) -- (736.83,266.68) -- (744.93,270.68) -- cycle ;
\draw  [fill={rgb, 255:red, 0; green, 0; blue, 0 }  ,fill opacity=1 ] (711.25,257.74) -- (712.9,273.85) -- (702.69,261.27) -- (709.93,266.68) -- cycle ;
\draw  [fill={rgb, 255:red, 0; green, 0; blue, 0 }  ,fill opacity=1 ] (805.85,600.18) -- (791.02,606.7) -- (799.86,593.13) -- (796.93,601.68) -- cycle ;
\draw  [fill={rgb, 255:red, 0; green, 0; blue, 0 }  ,fill opacity=1 ] (802.95,572.09) -- (787.54,577.07) -- (797.71,564.47) -- (793.93,572.68) -- cycle ;
\draw  [fill={rgb, 255:red, 0; green, 0; blue, 0 }  ,fill opacity=1 ] (868.75,486.03) -- (863.75,501.43) -- (859.5,485.81) -- (863.93,493.68) -- cycle ;
\draw  [fill={rgb, 255:red, 0; green, 0; blue, 0 }  ,fill opacity=1 ] (819.31,478.46) -- (808.78,490.77) -- (810.86,474.71) -- (811.93,483.68) -- cycle ;
\draw  [fill={rgb, 255:red, 0; green, 0; blue, 0 }  ,fill opacity=1 ] (718.51,663.6) -- (703.18,658.38) -- (718.87,654.35) -- (710.93,658.68) -- cycle ;
\draw  [fill={rgb, 255:red, 0; green, 0; blue, 0 }  ,fill opacity=1 ] (766.93,564.5) -- (750.94,567.03) -- (762.93,556.15) -- (757.93,563.68) -- cycle ;
\draw  [fill={rgb, 255:red, 0; green, 0; blue, 0 }  ,fill opacity=1 ] (848.22,467.19) -- (840.15,481.23) -- (839.21,465.06) -- (841.93,473.68) -- cycle ;
\draw  [fill={rgb, 255:red, 0; green, 0; blue, 0 }  ,fill opacity=1 ] (880.85,416.12) -- (879.54,432.27) -- (871.8,418.04) -- (877.93,424.68) -- cycle ;
\draw  [fill={rgb, 255:red, 0; green, 0; blue, 0 }  ,fill opacity=1 ] (678.42,699.78) -- (662.31,698.12) -- (676.7,690.69) -- (669.93,696.68) -- cycle ;
\draw  [fill={rgb, 255:red, 0; green, 0; blue, 0 }  ,fill opacity=1 ] (742.9,711.78) -- (726.83,713.8) -- (739.17,703.31) -- (733.93,710.68) -- cycle ;
\draw  [fill={rgb, 255:red, 0; green, 0; blue, 0 }  ,fill opacity=1 ] (536.02,848.28) -- (521.24,841.66) -- (537.23,839.1) -- (528.93,842.68) -- cycle ;
\draw  [fill={rgb, 255:red, 0; green, 0; blue, 0 }  ,fill opacity=1 ] (708.93,813.5) -- (692.94,816.03) -- (704.93,805.15) -- (699.93,812.68) -- cycle ;
\draw  [fill={rgb, 255:red, 0; green, 0; blue, 0 }  ,fill opacity=1 ] (593.57,788.51) -- (578.18,783.46) -- (593.82,779.26) -- (585.93,783.68) -- cycle ;
\draw  [fill={rgb, 255:red, 0; green, 0; blue, 0 }  ,fill opacity=1 ] (708.39,799.87) -- (692.3,798.04) -- (706.76,790.76) -- (699.93,796.68) -- cycle ;
\draw  [fill={rgb, 255:red, 0; green, 0; blue, 0 }  ,fill opacity=1 ] (736.94,745.95) -- (721.61,751.17) -- (731.58,738.41) -- (727.93,746.68) -- cycle ;
\draw  [fill={rgb, 255:red, 0; green, 0; blue, 0 }  ,fill opacity=1 ] (585.81,749.54) -- (571.29,742.37) -- (587.36,740.42) -- (578.93,743.68) -- cycle ;
\draw  [fill={rgb, 255:red, 0; green, 0; blue, 0 }  ,fill opacity=1 ] (526.92,820.44) -- (513.54,811.32) -- (529.73,811.63) -- (520.93,813.68) -- cycle ;
\draw  [fill={rgb, 255:red, 0; green, 0; blue, 0 }  ,fill opacity=1 ] (159.48,638.7) -- (156.3,622.82) -- (167.66,634.37) -- (159.93,629.68) -- cycle ;
\draw  [fill={rgb, 255:red, 0; green, 0; blue, 0 }  ,fill opacity=1 ] (120.36,582.34) -- (121.02,566.15) -- (129.33,580.06) -- (122.93,573.68) -- cycle ;
\draw  [fill={rgb, 255:red, 0; green, 0; blue, 0 }  ,fill opacity=1 ] (155.12,534.59) -- (162.18,520.02) -- (164.25,536.08) -- (160.93,527.68) -- cycle ;
\draw  [fill={rgb, 255:red, 0; green, 0; blue, 0 }  ,fill opacity=1 ] (184.68,611.1) -- (186.63,595.03) -- (193.8,609.55) -- (187.93,602.68) -- cycle ;
\draw  [fill={rgb, 255:red, 0; green, 0; blue, 0 }  ,fill opacity=1 ] (134.59,515.6) -- (138.66,499.92) -- (143.84,515.26) -- (138.93,507.68) -- cycle ;
\draw  [fill={rgb, 255:red, 0; green, 0; blue, 0 }  ,fill opacity=1 ] (183.74,524.14) -- (193.84,511.48) -- (192.33,527.6) -- (190.93,518.68) -- cycle ;
\draw  [fill={rgb, 255:red, 0; green, 0; blue, 0 }  ,fill opacity=1 ] (523.68,868.33) -- (508.17,863.65) -- (523.71,859.08) -- (515.93,863.68) -- cycle ;
\draw  [fill={rgb, 255:red, 0; green, 0; blue, 0 }  ,fill opacity=1 ] (579.58,881.31) -- (563.4,880.54) -- (577.36,872.32) -- (570.93,878.68) -- cycle ;
\draw  [fill={rgb, 255:red, 0; green, 0; blue, 0 }  ,fill opacity=1 ] (275.93,702.65) -- (273.72,686.61) -- (284.36,698.83) -- (276.93,693.68) -- cycle ;
\draw  [fill={rgb, 255:red, 0; green, 0; blue, 0 }  ,fill opacity=1 ] (391.33,740.7) -- (386.67,725.19) -- (399.06,735.62) -- (390.93,731.68) -- cycle ;
\draw  [fill={rgb, 255:red, 0; green, 0; blue, 0 }  ,fill opacity=1 ] (302.88,678.18) -- (304.45,662.06) -- (311.96,676.4) -- (305.93,669.68) -- cycle ;
\draw  [fill={rgb, 255:red, 0; green, 0; blue, 0 }  ,fill opacity=1 ] (407.85,808.23) -- (399.02,794.65) -- (413.84,801.17) -- (404.93,799.68) -- cycle ;
\draw  [fill={rgb, 255:red, 0; green, 0; blue, 0 }  ,fill opacity=1 ] (340.35,717.46) -- (344.89,701.92) -- (349.6,717.41) -- (344.93,709.68) -- cycle ;
\draw  [fill={rgb, 255:red, 0; green, 0; blue, 0 }  ,fill opacity=1 ] (467.84,768.17) -- (469.48,752.05) -- (476.94,766.43) -- (470.93,759.68) -- cycle ;
\draw  [fill={rgb, 255:red, 0; green, 0; blue, 0 }  ,fill opacity=1 ] (257.52,737.69) -- (252.54,722.28) -- (265.14,732.45) -- (256.93,728.68) -- cycle ;
\draw  [fill={rgb, 255:red, 0; green, 0; blue, 0 }  ,fill opacity=1 ] (215.37,591.47) -- (219.88,575.92) -- (224.62,591.4) -- (219.93,583.68) -- cycle ;
\draw  [fill={rgb, 255:red, 0; green, 0; blue, 0 }  ,fill opacity=1 ] (439.31,766.56) -- (438.22,750.41) -- (447.98,763.33) -- (440.93,757.68) -- cycle ;
\draw  [fill={rgb, 255:red, 0; green, 0; blue, 0 }  ,fill opacity=1 ] (251.87,592.38) -- (259.46,578.07) -- (260.95,594.19) -- (257.93,585.68) -- cycle ;
\draw  [fill={rgb, 255:red, 0; green, 0; blue, 0 }  ,fill opacity=1 ] (322.44,301.61) -- (338.55,303.2) -- (324.2,310.69) -- (330.93,304.68) -- cycle ;
\draw  [fill={rgb, 255:red, 0; green, 0; blue, 0 }  ,fill opacity=1 ] (477.48,181.48) -- (490.13,191.59) -- (474.01,190.06) -- (482.93,188.68) -- cycle ;
\draw  [fill={rgb, 255:red, 0; green, 0; blue, 0 }  ,fill opacity=1 ] (413.6,251.24) -- (427.44,259.64) -- (411.26,260.19) -- (419.93,257.68) -- cycle ;
\draw  [fill={rgb, 255:red, 0; green, 0; blue, 0 }  ,fill opacity=1 ] (412.06,214.24) -- (427.69,218.48) -- (412.29,223.49) -- (419.93,218.68) -- cycle ;
\draw  [fill={rgb, 255:red, 0; green, 0; blue, 0 }  ,fill opacity=1 ] (263.91,257.18) -- (279.37,252.34) -- (269.09,264.85) -- (272.93,256.68) -- cycle ;
\draw  [fill={rgb, 255:red, 0; green, 0; blue, 0 }  ,fill opacity=1 ] (237.5,464.45) -- (253.58,466.35) -- (239.08,473.56) -- (245.93,467.68) -- cycle ;
\draw  [fill={rgb, 255:red, 0; green, 0; blue, 0 }  ,fill opacity=1 ] (276.59,338.41) -- (291.67,344.33) -- (275.81,347.63) -- (283.93,343.68) -- cycle ;
\draw  [fill={rgb, 255:red, 0; green, 0; blue, 0 }  ,fill opacity=1 ] (234,437.35) -- (250.11,435.72) -- (237.52,445.91) -- (242.93,438.68) -- cycle ;
\draw  [fill={rgb, 255:red, 0; green, 0; blue, 0 }  ,fill opacity=1 ] (194.13,404.71) -- (208.53,397.3) -- (200.54,411.39) -- (202.93,402.68) -- cycle ;
\draw  [fill={rgb, 255:red, 0; green, 0; blue, 0 }  ,fill opacity=1 ] (254.9,392.98) -- (270.46,388.48) -- (259.91,400.76) -- (263.93,392.68) -- cycle ;
\draw  [fill={rgb, 255:red, 0; green, 0; blue, 0 }  ,fill opacity=1 ] (420.38,120.76) -- (436.53,122.07) -- (422.3,129.81) -- (428.93,123.68) -- cycle ;
\draw  [fill={rgb, 255:red, 0; green, 0; blue, 0 }  ,fill opacity=1 ] (261.02,289.2) -- (277.16,287.85) -- (264.4,297.81) -- (269.93,290.68) -- cycle ;
\draw  [fill={rgb, 255:red, 0; green, 0; blue, 0 }  ,fill opacity=1 ] (487.32,132.81) -- (502.69,137.91) -- (487.04,142.06) -- (494.93,137.68) -- cycle ;
\draw  [fill={rgb, 255:red, 0; green, 0; blue, 0 }  ,fill opacity=1 ] (470.64,154.2) -- (484.43,162.69) -- (468.24,163.14) -- (476.93,160.68) -- cycle ;
\draw  [fill={rgb, 255:red, 0; green, 0; blue, 0 }  ,fill opacity=1 ] (367.08,157.87) -- (383.26,157.11) -- (370.14,166.6) -- (375.93,159.68) -- cycle ;
\draw  [fill={rgb, 255:red, 0; green, 0; blue, 0 }  ,fill opacity=1 ] (392.43,182.64) -- (408.55,184.17) -- (394.22,191.72) -- (400.93,185.68) -- cycle ;

\draw (480,35) node [anchor=north west][inner sep=0.75pt]    {${\mathbf 1}$};
\draw (660,55) node [anchor=north west][inner sep=0.75pt]    {$a^{4}$};
\draw (800,150) node [anchor=north west][inner sep=0.75pt]    {$a^{2} x$};
\draw (885,280) node [anchor=north west][inner sep=0.75pt]    {$a^{6} x$};
\draw (930,480) node [anchor=north west][inner sep=0.75pt]    {$a$};
\draw (880,650) node [anchor=north west][inner sep=0.75pt]    {$a^{5}$};
\draw (802,780) node [anchor=north west][inner sep=0.75pt]    {$a^{3} x$};
\draw (649,890) node [anchor=north west][inner sep=0.75pt]    {$a^{7} x$};
\draw (485,920) node [anchor=north west][inner sep=0.75pt]    {$a^{2}$};
\draw (300,895.57) node [anchor=north west][inner sep=0.75pt]    {$a^{6}$};
\draw (167,801.57) node [anchor=north west][inner sep=0.75pt]    {$x$};
\draw (40,651.57) node [anchor=north west][inner sep=0.75pt]    {$a^{4} x$};
\draw (20,470) node [anchor=north west][inner sep=0.75pt]    {$a^{3}$};
\draw (50,290) node [anchor=north west][inner sep=0.75pt]    {$a^{7}$};
\draw (160,160) node [anchor=north west][inner sep=0.75pt]    {$ax$};
\draw (300,55) node [anchor=north west][inner sep=0.75pt]    {$a^{5} x$};

\end{tikzpicture}
\caption{$S=\{ a,a^5, a^3x, a^7x\}$}\label{aaa}
\end{subfigure}
\hfill
\begin{subfigure}{0.45\textwidth}

\tikzset{every picture/.style={line width=0.75pt}} 

\begin{tikzpicture}[x=0.2pt,y=0.2pt,yscale=-1,xscale=1]

\draw  [color={rgb, 255:red, 0; green, 0; blue, 0 }  ,draw opacity=1 ][line width=0.75]  (487.6,100.86) .. controls (487.6,93.84) and (493.3,88.15) .. (500.32,88.15) .. controls (507.35,88.15) and (513.04,93.84) .. (513.04,100.86) .. controls (513.04,107.89) and (507.35,113.58) .. (500.32,113.58) .. controls (493.3,113.58) and (487.6,107.89) .. (487.6,100.86) -- cycle ;
\draw  [color={rgb, 255:red, 0; green, 0; blue, 0 }  ,draw opacity=1 ][line width=0.75]  (641.6,131.86) .. controls (641.6,124.84) and (647.3,119.15) .. (654.32,119.15) .. controls (661.35,119.15) and (667.04,124.84) .. (667.04,131.86) .. controls (667.04,138.89) and (661.35,144.58) .. (654.32,144.58) .. controls (647.3,144.58) and (641.6,138.89) .. (641.6,131.86) -- cycle ;
\draw  [color={rgb, 255:red, 0; green, 0; blue, 0 }  ,draw opacity=1 ][line width=0.75]  (770.6,217.86) .. controls (770.6,210.84) and (776.3,205.15) .. (783.32,205.15) .. controls (790.35,205.15) and (796.04,210.84) .. (796.04,217.86) .. controls (796.04,224.89) and (790.35,230.58) .. (783.32,230.58) .. controls (776.3,230.58) and (770.6,224.89) .. (770.6,217.86) -- cycle ;
\draw  [color={rgb, 255:red, 0; green, 0; blue, 0 }  ,draw opacity=1 ][line width=0.75]  (204.6,218.86) .. controls (204.6,211.84) and (210.3,206.15) .. (217.32,206.15) .. controls (224.35,206.15) and (230.04,211.84) .. (230.04,218.86) .. controls (230.04,225.89) and (224.35,231.58) .. (217.32,231.58) .. controls (210.3,231.58) and (204.6,225.89) .. (204.6,218.86) -- cycle ;
\draw  [color={rgb, 255:red, 0; green, 0; blue, 0 }  ,draw opacity=1 ][line width=0.75]  (119.6,346.86) .. controls (119.6,339.84) and (125.3,334.15) .. (132.32,334.15) .. controls (139.35,334.15) and (145.04,339.84) .. (145.04,346.86) .. controls (145.04,353.89) and (139.35,359.58) .. (132.32,359.58) .. controls (125.3,359.58) and (119.6,353.89) .. (119.6,346.86) -- cycle ;
\draw  [color={rgb, 255:red, 0; green, 0; blue, 0 }  ,draw opacity=1 ][line width=0.75]  (87.6,500.86) .. controls (87.6,493.84) and (93.3,488.15) .. (100.32,488.15) .. controls (107.35,488.15) and (113.04,493.84) .. (113.04,500.86) .. controls (113.04,507.89) and (107.35,513.58) .. (100.32,513.58) .. controls (93.3,513.58) and (87.6,507.89) .. (87.6,500.86) -- cycle ;
\draw  [color={rgb, 255:red, 0; green, 0; blue, 0 }  ,draw opacity=1 ][line width=0.75]  (205.6,782.86) .. controls (205.6,775.84) and (211.3,770.15) .. (218.32,770.15) .. controls (225.35,770.15) and (231.04,775.84) .. (231.04,782.86) .. controls (231.04,789.89) and (225.35,795.58) .. (218.32,795.58) .. controls (211.3,795.58) and (205.6,789.89) .. (205.6,782.86) -- cycle ;
\draw  [color={rgb, 255:red, 0; green, 0; blue, 0 }  ,draw opacity=1 ][line width=0.75]  (333.6,869.86) .. controls (333.6,862.84) and (339.3,857.15) .. (346.32,857.15) .. controls (353.35,857.15) and (359.04,862.84) .. (359.04,869.86) .. controls (359.04,876.89) and (353.35,882.58) .. (346.32,882.58) .. controls (339.3,882.58) and (333.6,876.89) .. (333.6,869.86) -- cycle ;
\draw  [color={rgb, 255:red, 0; green, 0; blue, 0 }  ,draw opacity=1 ][line width=0.75]  (488.6,899.86) .. controls (488.6,892.84) and (494.3,887.15) .. (501.32,887.15) .. controls (508.35,887.15) and (514.04,892.84) .. (514.04,899.86) .. controls (514.04,906.89) and (508.35,912.58) .. (501.32,912.58) .. controls (494.3,912.58) and (488.6,906.89) .. (488.6,899.86) -- cycle ;
\draw  [color={rgb, 255:red, 0; green, 0; blue, 0 }  ,draw opacity=1 ][line width=0.75]  (770.6,782.86) .. controls (770.6,775.84) and (776.3,770.15) .. (783.32,770.15) .. controls (790.35,770.15) and (796.04,775.84) .. (796.04,782.86) .. controls (796.04,789.89) and (790.35,795.58) .. (783.32,795.58) .. controls (776.3,795.58) and (770.6,789.89) .. (770.6,782.86) -- cycle ;
\draw  [color={rgb, 255:red, 0; green, 0; blue, 0 }  ,draw opacity=1 ][line width=0.75]  (857.6,653.86) .. controls (857.6,646.84) and (863.3,641.15) .. (870.32,641.15) .. controls (877.35,641.15) and (883.04,646.84) .. (883.04,653.86) .. controls (883.04,660.89) and (877.35,666.58) .. (870.32,666.58) .. controls (863.3,666.58) and (857.6,660.89) .. (857.6,653.86) -- cycle ;
\draw  [color={rgb, 255:red, 0; green, 0; blue, 0 }  ,draw opacity=1 ][line width=0.75]  (886.6,499.86) .. controls (886.6,492.84) and (892.3,487.15) .. (899.32,487.15) .. controls (906.35,487.15) and (912.04,492.84) .. (912.04,499.86) .. controls (912.04,506.89) and (906.35,512.58) .. (899.32,512.58) .. controls (892.3,512.58) and (886.6,506.89) .. (886.6,499.86) -- cycle ;
\draw  [color={rgb, 255:red, 0; green, 0; blue, 0 }  ,draw opacity=1 ][line width=0.75]  (335.6,130.86) .. controls (335.6,123.84) and (341.3,118.15) .. (348.32,118.15) .. controls (355.35,118.15) and (361.04,123.84) .. (361.04,130.86) .. controls (361.04,137.89) and (355.35,143.58) .. (348.32,143.58) .. controls (341.3,143.58) and (335.6,137.89) .. (335.6,130.86) -- cycle ;
\draw  [color={rgb, 255:red, 0; green, 0; blue, 0 }  ,draw opacity=1 ][line width=0.75]  (641.6,869.86) .. controls (641.6,862.84) and (647.3,857.15) .. (654.32,857.15) .. controls (661.35,857.15) and (667.04,862.84) .. (667.04,869.86) .. controls (667.04,876.89) and (661.35,882.58) .. (654.32,882.58) .. controls (647.3,882.58) and (641.6,876.89) .. (641.6,869.86) -- cycle ;
\draw  [color={rgb, 255:red, 0; green, 0; blue, 0 }  ,draw opacity=1 ][line width=0.75]  (119.6,653.86) .. controls (119.6,646.84) and (125.3,641.15) .. (132.32,641.15) .. controls (139.35,641.15) and (145.04,646.84) .. (145.04,653.86) .. controls (145.04,660.89) and (139.35,666.58) .. (132.32,666.58) .. controls (125.3,666.58) and (119.6,660.89) .. (119.6,653.86) -- cycle ;
\draw  [color={rgb, 255:red, 0; green, 0; blue, 0 }  ,draw opacity=1 ][line width=0.75]  (857.6,347.86) .. controls (857.6,340.84) and (863.3,335.15) .. (870.32,335.15) .. controls (877.35,335.15) and (883.04,340.84) .. (883.04,347.86) .. controls (883.04,354.89) and (877.35,360.58) .. (870.32,360.58) .. controls (863.3,360.58) and (857.6,354.89) .. (857.6,347.86) -- cycle ;
\draw    (509,111.17) -- (890.58,491.42) ;
\draw    (509,111.17) -- (862.58,645.42) ;
\draw    (509,111.17) -- (783.32,770.15) ;
\draw    (509,111.17) -- (654.32,857.15) ;
\draw    (660.58,143.42) -- (890.58,491.42) ;
\draw    (660.58,143.42) -- (862.58,645.42) ;
\draw    (660.58,143.42) -- (783.32,770.15) ;
\draw    (660.58,143.42) -- (654.32,857.15) ;
\draw    (783.32,230.58) -- (890.58,491.42) ;
\draw    (783.32,230.58) -- (862.58,645.42) ;
\draw    (783.32,230.58) -- (783.32,770.15) ;
\draw    (783.32,230.58) -- (654.32,857.15) ;
\draw    (864.58,359.42) -- (890.58,491.42) ;
\draw    (864.58,359.42) -- (862.58,645.42) ;
\draw    (864.58,359.42) -- (783.32,770.15) ;
\draw    (864.58,359.42) -- (654.32,857.15) ;
\draw    (496.71,888.52) -- (108.58,510.42) ;
\draw    (496.71,888.52) -- (138.47,357.38) ;
\draw    (496.71,888.52) -- (216.64,231.96) ;
\draw    (496.71,888.52) -- (344.88,143.84) ;
\draw    (344.85,857.6) -- (108.58,510.42) ;
\draw    (344.85,857.6) -- (138.47,357.38) ;
\draw    (344.85,857.6) -- (216.64,231.96) ;
\draw    (344.85,857.6) -- (344.88,143.84) ;
\draw    (221.35,771.5) -- (108.58,510.42) ;
\draw    (221.35,771.5) -- (138.47,357.38) ;
\draw    (221.35,771.5) -- (216.64,231.96) ;
\draw    (221.35,771.5) -- (344.88,143.84) ;
\draw    (138.97,643.38) -- (108.58,510.42) ;
\draw    (138.97,643.38) -- (138.47,357.38) ;
\draw    (138.97,643.38) -- (216.64,231.96) ;
\draw    (138.97,643.38) -- (344.88,143.84) ;
\draw    (890.07,508.46) -- (510.56,890.78) ;
\draw    (890.07,508.46) -- (356.5,863.07) ;
\draw    (890.07,508.46) -- (231.62,784.05) ;
\draw    (890.07,508.46) -- (144.38,655.22) ;
\draw    (858.12,660.11) -- (510.56,890.78) ;
\draw    (858.12,660.11) -- (356.5,863.07) ;
\draw    (858.12,660.11) -- (231.62,784.05) ;
\draw    (858.12,660.11) -- (144.38,655.22) ;
\draw    (771.18,783.01) -- (510.56,890.78) ;
\draw    (771.18,783.01) -- (356.5,863.07) ;
\draw    (771.18,783.01) -- (231.62,784.05) ;
\draw    (771.18,783.01) -- (144.38,655.22) ;
\draw    (642.51,864.52) -- (510.56,890.78) ;
\draw    (642.51,864.52) -- (356.5,863.07) ;
\draw    (642.51,864.52) -- (231.62,784.05) ;
\draw    (642.51,864.52) -- (144.38,655.22) ;
\draw    (111.88,493.69) -- (491.27,111.24) ;
\draw    (111.88,493.69) -- (645.33,138.89) ;
\draw    (111.88,493.69) -- (770.24,217.87) ;
\draw    (111.88,493.69) -- (857.53,346.67) ;
\draw    (143.79,342.03) -- (491.27,111.24) ;
\draw    (143.79,342.03) -- (645.33,138.89) ;
\draw    (143.79,342.03) -- (770.24,217.87) ;
\draw    (143.79,342.03) -- (857.53,346.67) ;
\draw    (230.68,219.1) -- (491.27,111.24) ;
\draw    (230.68,219.1) -- (645.33,138.89) ;
\draw    (230.68,219.1) -- (770.24,217.87) ;
\draw    (230.68,219.1) -- (857.53,346.67) ;
\draw    (359.33,137.54) -- (491.27,111.24) ;
\draw    (359.33,137.54) -- (645.33,138.89) ;
\draw    (359.33,137.54) -- (770.24,217.87) ;
\draw    (359.33,137.54) -- (857.53,346.67) ;
\draw  [fill={rgb, 255:red, 0; green, 0; blue, 0 }  ,fill opacity=1 ] (539.01,239.18) -- (537.41,255.29) -- (529.92,240.94) -- (535.93,247.68) -- cycle ;
\draw  [fill={rgb, 255:red, 0; green, 0; blue, 0 }  ,fill opacity=1 ] (563.3,228.75) -- (564.85,244.87) -- (554.73,232.23) -- (561.93,237.68) -- cycle ;
\draw  [fill={rgb, 255:red, 0; green, 0; blue, 0 }  ,fill opacity=1 ] (663.55,281.91) -- (658.95,297.44) -- (654.29,281.92) -- (658.93,289.68) -- cycle ;
\draw  [fill={rgb, 255:red, 0; green, 0; blue, 0 }  ,fill opacity=1 ] (701.1,324.21) -- (699.32,340.31) -- (692,325.87) -- (697.93,332.68) -- cycle ;
\draw  [fill={rgb, 255:red, 0; green, 0; blue, 0 }  ,fill opacity=1 ] (765.5,537.56) -- (749.35,536.31) -- (763.55,528.52) -- (756.93,534.68) -- cycle ;
\draw  [fill={rgb, 255:red, 0; green, 0; blue, 0 }  ,fill opacity=1 ] (818.44,388.35) -- (816.01,404.36) -- (809.28,389.63) -- (814.93,396.68) -- cycle ;
\draw  [fill={rgb, 255:red, 0; green, 0; blue, 0 }  ,fill opacity=1 ] (815.42,295.76) -- (816.76,311.9) -- (806.8,299.13) -- (813.93,304.68) -- cycle ;
\draw  [fill={rgb, 255:red, 0; green, 0; blue, 0 }  ,fill opacity=1 ] (868.75,486.03) -- (863.75,501.43) -- (859.5,485.81) -- (863.93,493.68) -- cycle ;
\draw  [fill={rgb, 255:red, 0; green, 0; blue, 0 }  ,fill opacity=1 ] (718.51,663.6) -- (703.18,658.38) -- (718.87,654.35) -- (710.93,658.68) -- cycle ;
\draw  [fill={rgb, 255:red, 0; green, 0; blue, 0 }  ,fill opacity=1 ] (766.93,564.5) -- (750.94,567.03) -- (762.93,556.15) -- (757.93,563.68) -- cycle ;
\draw  [fill={rgb, 255:red, 0; green, 0; blue, 0 }  ,fill opacity=1 ] (880.85,416.12) -- (879.54,432.27) -- (871.8,418.04) -- (877.93,424.68) -- cycle ;
\draw  [fill={rgb, 255:red, 0; green, 0; blue, 0 }  ,fill opacity=1 ] (678.42,699.78) -- (662.31,698.12) -- (676.7,690.69) -- (669.93,696.68) -- cycle ;
\draw  [fill={rgb, 255:red, 0; green, 0; blue, 0 }  ,fill opacity=1 ] (708.93,813.5) -- (692.94,816.03) -- (704.93,805.15) -- (699.93,812.68) -- cycle ;
\draw  [fill={rgb, 255:red, 0; green, 0; blue, 0 }  ,fill opacity=1 ] (708.39,799.87) -- (692.3,798.04) -- (706.76,790.76) -- (699.93,796.68) -- cycle ;
\draw  [fill={rgb, 255:red, 0; green, 0; blue, 0 }  ,fill opacity=1 ] (159.48,638.7) -- (156.3,622.82) -- (167.66,634.37) -- (159.93,629.68) -- cycle ;
\draw  [fill={rgb, 255:red, 0; green, 0; blue, 0 }  ,fill opacity=1 ] (120.36,582.34) -- (121.02,566.15) -- (129.33,580.06) -- (122.93,573.68) -- cycle ;
\draw  [fill={rgb, 255:red, 0; green, 0; blue, 0 }  ,fill opacity=1 ] (184.68,611.1) -- (186.63,595.03) -- (193.8,609.55) -- (187.93,602.68) -- cycle ;
\draw  [fill={rgb, 255:red, 0; green, 0; blue, 0 }  ,fill opacity=1 ] (134.59,515.6) -- (138.66,499.92) -- (143.84,515.26) -- (138.93,507.68) -- cycle ;
\draw  [fill={rgb, 255:red, 0; green, 0; blue, 0 }  ,fill opacity=1 ] (523.68,868.33) -- (508.17,863.65) -- (523.71,859.08) -- (515.93,863.68) -- cycle ;
\draw  [fill={rgb, 255:red, 0; green, 0; blue, 0 }  ,fill opacity=1 ] (579.58,881.31) -- (563.4,880.54) -- (577.36,872.32) -- (570.93,878.68) -- cycle ;
\draw  [fill={rgb, 255:red, 0; green, 0; blue, 0 }  ,fill opacity=1 ] (302.88,678.18) -- (304.45,662.06) -- (311.96,676.4) -- (305.93,669.68) -- cycle ;
\draw  [fill={rgb, 255:red, 0; green, 0; blue, 0 }  ,fill opacity=1 ] (340.35,717.46) -- (344.89,701.92) -- (349.6,717.41) -- (344.93,709.68) -- cycle ;
\draw  [fill={rgb, 255:red, 0; green, 0; blue, 0 }  ,fill opacity=1 ] (467.84,768.17) -- (469.48,752.05) -- (476.94,766.43) -- (470.93,759.68) -- cycle ;
\draw  [fill={rgb, 255:red, 0; green, 0; blue, 0 }  ,fill opacity=1 ] (439.31,766.56) -- (438.22,750.41) -- (447.98,763.33) -- (440.93,757.68) -- cycle ;
\draw  [fill={rgb, 255:red, 0; green, 0; blue, 0 }  ,fill opacity=1 ] (322.44,301.61) -- (338.55,303.2) -- (324.2,310.69) -- (330.93,304.68) -- cycle ;
\draw  [fill={rgb, 255:red, 0; green, 0; blue, 0 }  ,fill opacity=1 ] (237.5,464.45) -- (253.58,466.35) -- (239.08,473.56) -- (245.93,467.68) -- cycle ;
\draw  [fill={rgb, 255:red, 0; green, 0; blue, 0 }  ,fill opacity=1 ] (276.59,338.41) -- (291.67,344.33) -- (275.81,347.63) -- (283.93,343.68) -- cycle ;
\draw  [fill={rgb, 255:red, 0; green, 0; blue, 0 }  ,fill opacity=1 ] (234,437.35) -- (250.11,435.72) -- (237.52,445.91) -- (242.93,438.68) -- cycle ;
\draw  [fill={rgb, 255:red, 0; green, 0; blue, 0 }  ,fill opacity=1 ] (420.38,120.76) -- (436.53,122.07) -- (422.3,129.81) -- (428.93,123.68) -- cycle ;
\draw  [fill={rgb, 255:red, 0; green, 0; blue, 0 }  ,fill opacity=1 ] (487.32,132.81) -- (502.69,137.91) -- (487.04,142.06) -- (494.93,137.68) -- cycle ;
\draw  [fill={rgb, 255:red, 0; green, 0; blue, 0 }  ,fill opacity=1 ] (367.08,157.87) -- (383.26,157.11) -- (370.14,166.6) -- (375.93,159.68) -- cycle ;
\draw  [fill={rgb, 255:red, 0; green, 0; blue, 0 }  ,fill opacity=1 ] (392.43,182.64) -- (408.55,184.17) -- (394.22,191.72) -- (400.93,185.68) -- cycle ;

\draw (480,35) node [anchor=north west][inner sep=0.75pt]    {${\mathbf 1}$};
\draw (660,55) node [anchor=north west][inner sep=0.75pt]    {$a^{4}$};
\draw (800,150) node [anchor=north west][inner sep=0.75pt]    {$a^{2} x$};
\draw (885,280) node [anchor=north west][inner sep=0.75pt]    {$a^{6} x$};
\draw (930,480) node [anchor=north west][inner sep=0.75pt]    {$a$};
\draw (880,650) node [anchor=north west][inner sep=0.75pt]    {$a^{5}$};
\draw (802,780) node [anchor=north west][inner sep=0.75pt]    {$a^{3} x$};
\draw (649,890) node [anchor=north west][inner sep=0.75pt]    {$a^{7} x$};
\draw (485,920) node [anchor=north west][inner sep=0.75pt]    {$a^{2}$};
\draw (300,895.57) node [anchor=north west][inner sep=0.75pt]    {$a^{6}$};
\draw (167,801.57) node [anchor=north west][inner sep=0.75pt]    {$x$};
\draw (40,651.57) node [anchor=north west][inner sep=0.75pt]    {$a^{4} x$};
\draw (20,470) node [anchor=north west][inner sep=0.75pt]    {$a^{3}$};
\draw (50,290) node [anchor=north west][inner sep=0.75pt]    {$a^{7}$};
\draw (160,160) node [anchor=north west][inner sep=0.75pt]    {$ax$};
\draw (300,55) node [anchor=north west][inner sep=0.75pt]    {$a^{5} x$};

\end{tikzpicture}

\caption{$S=\{ a,a^3,a^5,a^7, a^3x, a^7x \}$} \label{bbb}
\end{subfigure}
\caption{The mixed graph ${\rm Cay}(M_{16}, S)$}
\end{figure}
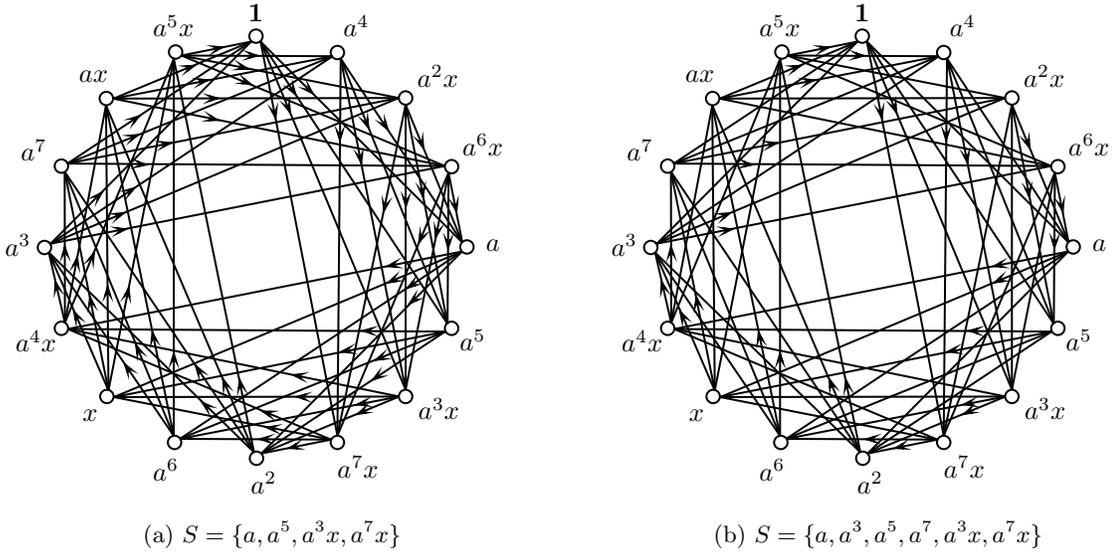


\section{Gaussian integral normal mixed Cayley graphs}\label{gussian-integral}

In \cite{kadyan2021integralAbelian}, the authors proved that if $\Gamma$ is an abelian group, then $\llbracket x \rrbracket \cup \llbracket x^{-1} \rrbracket =[x]$ for each $x \in \Gamma(4)$. Note that this result and its proof also hold for non-abelian group. In the subsequent discussion, we use this fact for non-abelian group.

Let $S$ be a union of some conjugacy classes of a finite group $\Gamma$ that does not contain ${\mathbf 1}$, and let ${\rm Irr}(\Gamma)=\{ \chi_1,\ldots,\chi_h\}$. Consider the function $f \colon \Gamma \rightarrow \{0,1\}$ defined by 
\[f(s)= \left\{ \begin{array}{rl}
		1 & \mbox{if } s\in S \\
		0 &   \mbox{otherwise} 
	\end{array}\right.\]
in Theorem~\ref{EigNorColCayMix1}. We see that $\frac{1}{\chi_j({\mathbf 1})}\sum\limits_{s \in S} \chi_{j}(s)$ is an eigenvalue of the normal mixed Cayley graph ${\rm Cay}(\Gamma, S)$ for each $j \in \{ 1,\ldots,h\}$. Indeed, all the eigenvalues of ${\rm Cay}(\Gamma, S)$ are of this form. For each $j \in \{ 1,\ldots,h\}$, define
$$f_{j}(S) :=\frac{1}{\chi_j({\mathbf 1})} \sum_{s \in S \setminus \overline{S}} \chi_{j}(s) + \frac{1}{2\chi_j({\mathbf 1})} \sum_{s \in \overline{S}\cup \overline{S}^{-1}} \chi_{j}(s)$$ \textnormal{and} $$ g_{j}(S):= \frac{{\mathbf i}}{2\chi_j({\mathbf 1})} \sum_{s \in \overline{S}}  (\chi_{j}(s) - \chi_{j}(s^{-1})).$$ 
We have 
\begin{equation}\label{eqEisenIntPart}
\begin{split}
\frac{1}{\chi_j({\mathbf 1})} \sum_{s \in S} \chi_{j}(s)= f_{j}(S)- {\mathbf i} g_{j}(S) \text{ for each } j \in \{ 1,\ldots,h\}.
\end{split}
\end{equation}

Therefore, the normal mixed Cayley graph ${\rm Cay}(\Gamma,S)$ is Gaussian integral if and only if $f_{j}(S)$ and $g_{j}(S)$ are integers for each $j \in \{ 1,\ldots,h\}$.

\begin{theorem}\label{MinCharacEisensteinIntegCh4}
Let $\Gamma$ be a finite group. If the normal mixed Cayley graph  ${\rm Cay}(\Gamma,S)$ is Gaussian integral, then it is H-integral.
\end{theorem}
\begin{proof}
Let the normal mixed Cayley graph ${\rm Cay}(\Gamma,S)$ be Gaussian integral. Therefore, $f_{j}(S)$ and $g_{j}(S)$ are integers for each $j \in \{ 1,\ldots,h\}$. Note that $2g_j(S)$ is an integer H-eigenvalue of the normal oriented Cayley graph ${\rm Cay}(\Gamma,\overline{S} )$ for each $j \in \{ 1,\ldots,h\}$. By Theorem \ref{NorOriCayGraphInteg}, we get $\overline{S}\in \mathbb{D}(\Gamma)$. Let $\overline{S}=\text{\footnotesize$\bigcup\limits_{j=1}^k$}\llbracket x_j \rrbracket=\text{\footnotesize$\bigcup\limits_{j=1}^s$} {\rm Cl}(y_j)$ for some $x_1,\ldots,x_k,y_1,\ldots,y_s \in \Gamma(4)$. Thus  $$\overline{S} \cup \overline{S}^{-1} = \text{\footnotesize$\bigcup\limits_{j=1}^k$} \left(\llbracket x_j \rrbracket \cup \llbracket x_j^{-1} \rrbracket\right) =  \text{\footnotesize$\bigcup\limits_{j=1}^k$} [x_j]=\text{\footnotesize$\bigcup\limits_{j=1}^s$} \left( {\rm Cl}(y_j)\cup {\rm Cl}(y_j^{-1})\right) \in \mathbb{B}(\Gamma).$$ Note that  $\frac{1}{\chi_j({\mathbf 1})} \sum\limits_{s \in  \overline{S}\cup \overline{S}^{-1} } \chi_j(s)$ is an eigenvalue of the normal simple Cayley graph \linebreak[4] ${\rm Cay}(\Gamma,\overline{S} \cup \overline{S}^{-1} )$ for each $j \in \{ 1,\ldots,h\}$. By Theorem~\ref{NorMixCayGraphInteg}, $ \frac{1}{\chi_j({\mathbf 1})}  \sum\limits_{s \in  \overline{S}\cup \overline{S}^{-1} } \chi_j(s)$ is an integer for each $j \in \{ 1,\ldots,h\}$. Therefore
$$ \frac{1}{\chi_j({\mathbf 1})}  \sum\limits_{s \in S\setminus \overline{S}} \chi_j(s) = f_j(S) -  \frac{1}{2 \chi_j({\mathbf 1}) } \sum\limits_{s \in  \overline{S}\cup \overline{S}^{-1} } \chi_j(s),$$ 
and hence $\frac{1}{\chi_j({\mathbf 1})}  \sum\limits_{s \in S\setminus \overline{S}} \chi_j(s)$  is a rational number. Thus, the eigenvalue $ \frac{1}{\chi_j({\mathbf 1})} \sum\limits_{s \in S\setminus \overline{S}} \chi_j(s)$ of the normal simple Cayley graph ${\rm Cay}(\Gamma,S\setminus \overline{S} )$ is a rational algebraic integer, and hence it is an integer for each $j \in \{1,\ldots,h\}$. Hence by Theorem~\ref{NorMixCayGraphInteg}, we get $S\setminus \overline{S} \in \mathbb{B}(\Gamma)$. Now the result follows from Theorem~\ref{NormCayIntegChara}.
\end{proof}

\begin{lema}\label{NewLemmaEquivaTrans1} Let $x \in \Gamma$ and ${\rm ord}(x)=2^{t}m$. If $t \geq 2$ and $m$ is odd, then the following assertions hold.
\begin{enumerate}[label=(\roman*)]
\item $\llbracket x \rrbracket  = \left\{ \begin{array}{ll}
			 x^{3m} [x^2] & \mbox{if } m \equiv 1 \Mod 4  \\
			x^{m} [x^2] & \mbox{if } m \equiv 3 \Mod 4.
		\end{array}\right. $
\item $\llbracket x^{-1} \rrbracket  = \left\{ \begin{array}{ll}
			 x^{m} [x^2] & \mbox{if } m \equiv 1 \Mod 4  \\
			x^{3m} [x^2] & \mbox{if } m \equiv 3 \Mod 4.
		\end{array}\right. $
\item  $[x]=x^m[x^2] \cup x^{3m} [x^2]$.
\end{enumerate}
\end{lema}
\begin{proof}
\begin{enumerate}[label=(\roman*)]
\item Assume that $t \geq 2$ and $k=2^{t}m$. Let $m \equiv 1 \Mod 4$ and $x^{3m+2r} \in x^{3m}[x^2]$ for some $r \in G_{\frac{k}{2}}(1)$. Now $\gcd(r, \frac{k}{2})=1$, and it implies that $\gcd(3m+2r, k)=1$ and  $3m+2r \equiv 1 \Mod 4$. We have $x^{3m+2r} \in \llbracket x \rrbracket$. Thus $x^{3m} [x^2] \subseteq \llbracket x \rrbracket$. Now since the sizes of $\llbracket x \rrbracket$ and $x^{3m} [x^2]$ are equal, we have $\llbracket x \rrbracket = x^{3m} [x^2]$. Similarly, if $m \equiv 3 \Mod 4$ then $\llbracket x \rrbracket = x^{m} [x^2]$.
\item The proof of this part is similar to the proof of Part (i). For the sake of completeness, we provide the proof. Assume that $t \geq 2$ and $k=2^{t}m$. Let $m \equiv 1 \Mod 4$  and $x^{m+2r} \in x^{m}[x^2]$ for some $r \in G_{\frac{k}{2}}(1)$. Now $\gcd(r, \frac{k}{2})=1$ implies that $\gcd(m+2r, k)=1$ and  $m+2r \equiv 3 \Mod 4$. We have $x^{m+2r} \in \llbracket x^{-1} \rrbracket$. Thus $x^{m} [x^2] \subseteq \llbracket x^{-1} \rrbracket$. Now since the sizes of $\llbracket x^{-1} \rrbracket$ and $x^{m} [x^2]$ are equal, we have $\llbracket x^{-1} \rrbracket = x^{m} [x^2]$. Similarly, if $m \equiv 3 \Mod 4$ then $\llbracket x^{-1} \rrbracket = x^{3m} [x^2]$.
\item Using Part (i), Part (ii) and $[x]=\llbracket x \rrbracket \cup \llbracket x^{-1} \rrbracket$, we get the result in desired form. \qedhere
\end{enumerate}
\end{proof}

For $ x\in \Gamma$, define $S_x^1:=\text{\footnotesize$\bigcup\limits_{s\in {\rm Cl}(x)}$} [ s ]$. We see that if $m=\ord(x)$, then 
\[S_x^1=\{g^{-1}x^r g\colon g\in \Gamma, r\in G_m(1)\}= \text{\footnotesize$\bigcup\limits_{s\in [x]}$}{\rm Cl}(s).\]
The set $S_x^1$ is also known as the rational conjugacy class\index{rational conjugacy class} of $x$. See~\cite{foster2016spectra} for details. For each $y\in S_x^1$, it is clear that ${\rm Cl}(y), [y]\subseteq S_x^1$. Now let $A$ be a symmetric subset of $\Gamma$ such that $x\in A$, and ${\rm Cl}(a), [a]\subseteq A$ for each $a\in A$. Let $g^{-1}x^r g\in S_x^1$, where $g\in \Gamma$, $r\in G_m(1)$ and $m=\ord(x)$. As $[x]\subseteq A$, we have $x^r\in A$. Now ${\rm Cl}(x^r)\subseteq A$, and so $g^{-1}x^r g\in A$. Thus $S_x^1 \subseteq A$, and therefore $S_x^1$ is the smallest symmetric subset of $\Gamma$ containing $x$ that is closed under both conjugacy and the equivalence relation $\sim$. Considering each of the  repeated equivalence classes, if any, only once  in $\text{\footnotesize$\bigcup\limits_{s\in {\rm Cl}(x)}$} [ s ]$, we can write $S_x^1=\text{\footnotesize$\bigcup\limits_{i=1}^{\ell}$}[x_i]$, where the equivalence classes $[x_1], \ldots, [x_{\ell}]$ are distinct. We state this fact in the next lemma.

\begin{lema}\label{SymbolSetChara1}
If $x \in \Gamma$, then there exist distinct equivalence classes $[x_1], \ldots, [x_{\ell}]$  such that $S_x^1= \text{\footnotesize$\bigcup\limits_{i=1}^{\ell}$}[x_i]$, where $x_1,\ldots,x_{\ell}\in {\rm Cl}(x)$.
\end{lema}

\begin{lema}\label{partition1}If $y\in S_x^1$, then $S_y^1=S_x^1$.
\end{lema}
\begin{proof}Let $y\in S_x^1$, so that $y=g^{-1}x^r g$ for some $g\in \Gamma$ and $r\in G_m(1)$, where $m=\ord(x)$. We see that $\ord(y)=\ord(x)=m$. Now let $z\in S_y^1$. Then $z=h^{-1}y^t h$ for some $h\in \Gamma$ and $t\in G_m(1)$. This gives $z=h^{-1}y^t h = h^{-1}g^{-1}x^{rt} g h\in S_x^1$. Conversely, let $w\in S_x^1$ so that $w=h^{-1}x^t h$ for some $h\in \Gamma$ and $t\in G_m(1)$. Therefore \[w=h^{-1}x^t h = (h^{-1}g)g^{-1}(x^r)^{r^{-1}t}g( g^{-1} h) =(h^{-1}g)y^{r^{-1}t}  ( g^{-1} h)\in S_y^1.\] Here $r^{-1}$ is the multiplicative inverse of $r$ in the group $G_m(1)$. Hence we conclude that $S_y^1=S_x^1$.
\end{proof}
Due to Lemma~\ref{partition1}, the sets $S_x^1$ and $S_y^1$ are either disjoint or equal. Hence the class of distinct subsets of $\Gamma$ of the form $S_x^1$ is a partition of $\Gamma$.

\begin{lema}\label{NewLemmaEquivaTrans2} Let $x\in \Gamma(4)$. If $S_x^1 =[ x_1 ] \cup \cdots \cup [ x_{\ell} ]$ for some $x_1,\ldots,x_{\ell}\in {\rm Cl}(x)$, then $S_{x^2}^1= [ x_1^2 ] \cup \cdots \cup [ x_{\ell}^2 ]$.
\end{lema}
\begin{proof}
Let $m={\rm ord}(x)$ and $S_x^1 = [ x_1 ] \cup \cdots \cup [ x_{\ell} ]$ for some $x_1\ldots,x_{\ell}\in {\rm Cl}(x)$. Assume that the sets $[ x_1 ], \ldots , [ x_{\ell} ]$ are all distinct. We see that
\begin{equation*}
\begin{split}
S_{x^2}^1&=\left\{g^{-1}x^{2r} g\colon g\in \Gamma, r\in G_{\frac{m}{2}}(1)\right\}\\
&=\left\{g^{-1}x^{2r} g\colon g\in \Gamma, r\in G_{\frac{m}{2}}(1)\right\} \cup \left\{g^{-1}x^{2(\frac{m}{2}+r)} g\colon g\in \Gamma, r\in G_{\frac{m}{2}}(1)\right\}\\
&=\left\{g^{-1}x^{2r} g\colon g\in \Gamma, r\in G_{m}(1), r <\frac{m}{2} \right\} \cup \left\{g^{-1}x^{2t} g\colon g\in \Gamma, t\in G_{m}(1), t >\frac{m}{2}\right\}\\
&=\{g^{-1}x^{2r} g\colon g\in \Gamma, r\in G_{m}(1) \}\\
&=\{y^2\colon y\in S_x^1\}.
\end{split}
\end{equation*}
Now noting that $\{s^2\colon s\in [x]\} =[x^2]$ and $S_x^1 =[ x_1 ] \cup \cdots \cup [ x_{\ell} ]$, we have $S_{x^2}^1= [ x_1^2 ] \cup \cdots \cup [ x_{\ell}^2 ]$.
\end{proof}

Let $x\in \Gamma(4)$ be an element of order $m$. The element $x$ is said to be \textit{admissible}\index{admissible} if $x^r \not\in {\rm Cl}(x)$ for all $r \in G_m^3(1)$. 
The following lemma characterizes admissible elements in terms of skew-symmetric sets. 
\begin{lema}\label{iff1}
If $x\in \Gamma(4)$, then $x$ is admissible if and only if the set $\text{\footnotesize$\bigcup\limits_{s\in {\rm Cl}(x)}$} \llbracket s \rrbracket$ is  skew-symmetric.
\end{lema} 
\begin{proof}We see that if $m=\ord(x)$, then 
\[\text{\footnotesize$\bigcup\limits_{s\in {\rm Cl}(x)}$} \llbracket s \rrbracket=\{g^{-1}x^r g\colon g\in \Gamma, r\in G_m^1(1)\}= \text{\footnotesize$\bigcup\limits_{s\in \llbracket x \rrbracket}$}{\rm Cl}(s).\]
Assume that $x$ is not admissible, so that $x^r\in {\rm Cl}(x)$ for some $r\in G_m^3(1)$. As $m-r\in G_m^1(1)$ and ${\rm Cl}(x)\subseteq \text{\footnotesize$\bigcup\limits_{s\in {\rm Cl}(x)}$} \llbracket s \rrbracket$, we find that $x^r,x^{m-r}\in \text{\footnotesize$\bigcup\limits_{s\in {\rm Cl}(x)}$} \llbracket s \rrbracket$. Hence $\text{\footnotesize$\bigcup\limits_{s\in {\rm Cl}(x)}$} \llbracket s \rrbracket$ is not skew-symmetric. 

Now assume that $\text{\footnotesize$\bigcup\limits_{s\in {\rm Cl}(x)}$} \llbracket s \rrbracket$ is not skew-symmetric. Then there is an $y=g^{-1}x^r g\in \text{\footnotesize$\bigcup\limits_{s\in {\rm Cl}(x)}$} \llbracket s \rrbracket$ for some $r\in G_m^1(1)$ such that $y^{-1}\in \text{\footnotesize$\bigcup\limits_{s\in {\rm Cl}(x)}$} \llbracket s \rrbracket$. Therefore $g^{-1}x^{m-r} g=y^{-1}=h^{-1}x^k h$ for some $h\in \Gamma, k\in G_m^1(1)$. Let $t\in G_m(1)$ be the multlipicative inverse of $m-r$. We have $g^{-1}x^{(m-r)t} g=h^{-1}x^{kt} h$, and it gives $x^{kt}=hg^{-1}x gh^{-1}\in {\rm Cl}(x)$. Since $(m-r)t\equiv 1 \Mod 4$ and $m-r\in G_m^3(1)$, we have that $t\in G_m^3(1)$. Thus $kt\in G_m^3(1)$ with $x^{kt}\in {\rm Cl}(x)$, giving that $x$ is not admissible. 
\end{proof}


Let $ x\in \Gamma(4)$ be admissible, and define $S_x^4:=\text{\footnotesize$\bigcup\limits_{s\in {\rm Cl}(x)}$} \llbracket s \rrbracket$. 
The structure and properties of the set $S_x^4$ are similar to those of $S_x^1$. If $\Gamma$ is abelian, then $S_x^4=\llbracket x \rrbracket$ for each $x \in \Gamma(4)$. 

For each $y\in S_x^4$, it is clear that ${\rm Cl}(y), \llbracket y \rrbracket\subseteq S_x^4$. Now let $A$ be a skew-symmetric subset of $\Gamma$ containing an admissible element $x$, and ${\rm Cl}(a), \llbracket a \rrbracket\subseteq A$ for each $a\in A$. It is easy to see that $S_x^4 \subseteq A$. Thus, $S_x^4$ is the smallest skew-symmetric subset of $\Gamma$ containing $x$ that is closed under both conjugacy and the equivalence relation $\approx$. Considering each of the  repeated equivalence classes, if any, only once in $\text{\footnotesize$\bigcup\limits_{s\in {\rm Cl}(x)}$}\llbracket s \rrbracket$, we can write $S_x^4=\text{\footnotesize$\bigcup\limits_{i=1}^r$}\llbracket y_i \rrbracket$, where the equivalence classes $\llbracket y_1 \rrbracket, \ldots, \llbracket y_r \rrbracket$ are distinct. We state this fact in the next lemma.

\begin{lema}\label{SymbolSetChara2}
If $x$ is an admissible element in $\Gamma(4)$, then there are distinct equivalence classes $\llbracket y_1 \rrbracket, \ldots, \llbracket y_r \rrbracket$ such that $S_x^4= \text{\footnotesize$\bigcup\limits_{i=1}^r$}\llbracket y_i \rrbracket$, where $y_1,\ldots,y_r\in {\rm Cl}(x)$.
\end{lema}

\begin{lema}\label{partition2}If $y\in S_x^4$, then $S_y^4=S_x^4$.
\end{lema}
\begin{proof}Let $y\in S_x^4$, so that $y=g^{-1}x^r g$ for some $g\in \Gamma$ and $r\in G_m^1(1)$, where $m=\ord(x)$. We see that $\ord(y)=\ord(x)=m$. Now let $z\in S_y^4$. Then $z=h^{-1}y^t h$ for some $h\in \Gamma$ and $t\in G_m^1(1)$. This gives $z=h^{-1}y^t h = h^{-1}g^{-1}x^{rt} g h\in S_x^4$. Conversely, let $w\in S_x^4$ so that $w=h^{-1}x^t h$ for some $h\in \Gamma$ and $t\in G_m^1(1)$. Therefore 
\[w=h^{-1}x^t h = (h^{-1}g)g^{-1}(x^r)^{r^{-1}t}g( g^{-1} h) =(h^{-1}g)y^{r^{-1}t}  ( g^{-1} h)\in S_y^4.\]
Here $r^{-1}$ is the multiplicative inverse of $r$ in the subgroup $G_m^1(1)$. Thus we conclude that $S_y^4=S_x^4$.
\end{proof}
Due to Lemma~\ref{partition2}, the sets $S_x^4$ and $S_y^4$ are either disjoint or equal. 

\begin{lema} If $x\in \Gamma(4)$ is admissible, then $S_x^4 \cup S_{x^{-1}}^4=S_x^1$.
\end{lema}
\begin{proof}Let $m=\ord(x)$. We have
\begin{align*}
S_x^4 \cup S_{x^{-1}}^4= & \{g^{-1}x^{r} g\colon g\in \Gamma, r\in G_m^1(1)\} \cup \{g^{-1}x^{-r} g\colon g\in \Gamma, r\in G_m^1(1)\}\\
=& \{g^{-1}x^{r} g\colon g\in \Gamma, r\in G_m^1(1)\} \cup \{g^{-1}x^{r} g\colon g\in \Gamma, r\in G_m^3(1)\}\\
=& \{g^{-1}x^{r} g\colon g\in \Gamma, r\in G_m(1)\}\\
=& S_x^1 . \qedhere
\end{align*}
\end{proof}

\begin{lema}\label{NewLemmaEquivaTrans3} Let $x \in \Gamma(4)$ be an admissible element. If $S_x^4 = \llbracket x_1 \rrbracket  \cup \cdots \cup \llbracket x_r \rrbracket$ for some $x_1,\ldots,x_r\in {\rm Cl}(x)$, then $S_{x^2}^1 = [ x_1^{2} ] \cup \cdots \cup [ x_r^{2} ]$.
\end{lema}
\begin{proof}
Let $S_x^4 = \llbracket x_1 \rrbracket  \cup \cdots \cup \llbracket x_r \rrbracket$, where $x_1,\ldots,x_r\in {\rm Cl}(x)$. Then $S_{x^{-1}}^4 = \llbracket x_1^{-1} \rrbracket  \cup \cdots \cup \llbracket x_r ^{-1}\rrbracket$. Therefore
\begin{align*}
S_x^1 =S_x^4 \cup S_{x^{-1}}^4 = \left(\llbracket x_1 \rrbracket \cup \llbracket x_1^{-1} \rrbracket \right)  \cup \cdots \cup \left(\llbracket x_r \rrbracket \cup \llbracket x_r^{-1} \rrbracket\right)
=[ x_1 ]  \cup \cdots \cup [ x_r ].
\end{align*}
Now the result follows from Lemma~\ref{NewLemmaEquivaTrans2}.
\end{proof}

\noindent For $x \in \Gamma$ and $j\in \{1,\ldots,h\}$, define 
$$C_x(j):=\frac{1}{\chi_j({\mathbf 1})}\sum_{s \in S^1_x } \chi_{j}(s).$$ 
Note that $S_x^1 \in \mathbb{B}(\Gamma)$ and $C_x(j)$ is an eigenvalue of the normal simple Cayley graph ${\rm Cay}(\Gamma, S^1_x)$. As a consequence of Theorem~\ref{NorMixCayGraphInteg}, $C_x(j)$ is an integer for each $x \in \Gamma$ and $j\in \{1,\ldots,h\}$.

\begin{lema}\label{NewSum4mLemma11} Let $x\in \Gamma$ and ${\rm ord}(x)=2^{t}m$. If $m$ is odd and $t \geq 2$, then
$$C_x(j)=(\chi_j(x^m) + \chi_j(x^{3m})) C_{x^2}(j).$$ Moreover, $C_x(j)$ is an even integer for each $j\in \{1,\ldots,h\}$.
\end{lema}
\begin{proof} Let $S_x^1= [ x_1 ] \cup \cdots \cup [ x_k ]$ for some $x_1,\ldots,x_k\in {\rm Cl}(x)$. For $j\in \{1,\ldots,h\}$, we have
\begin{align}
C_x(j) &= \frac{1}{\chi_j({\mathbf 1})} \sum_{r=1}^{k}  \sum_{s \in [ x_r ] } \chi_j(s) \nonumber\\
&= \frac{1}{\chi_j({\mathbf 1})} \sum_{r=1}^{k} \bigg( \sum_{s \in [ x_r^2 ] } \chi_j(x_r^m) \chi_j(s) + \sum_{s \in [ x_r^2 ] } \chi_j(x_r^{3m}) \chi_j(s) \bigg) \nonumber\\
&=(\chi_j(x^m) + \chi_j(x^{3m})) \frac{1}{\chi_j({\mathbf 1})} \sum_{r=1}^{k}  \sum_{s \in [ x_r^2 ] }  \chi_j(s) \nonumber\\
&=(\chi_j(x^m) + \chi_j(x^{3m})) C_{x^2}(j). \label{EqNewSum4mLemma11}
\end{align} 
The second equality in the preceding equations follows from Part (iii) of Lemma~\ref{NewLemmaEquivaTrans1} and the fourth equality follows from Lemma~\ref{NewLemmaEquivaTrans2}.

We apply induction on $t$ to prove that $C_x(j)$ is an even integer.  Let $\rho_j$ be a representation corresponding to $\chi_j$. If $t=2$ then $\rho_j(x^m)^4$ is the identity matrix, and so each eigenvalue of $\rho_j(x^m)$ is a $4$-th root of unity. Thus, $\chi_j(x^m)$ is the trace of a matrix whose eigenvalues are $4$-th roots of unity. Therefore $\chi_j(x^m) + \chi_j(x^{3m})=2\Re(\chi_j(x^m))$, an even integer. Hence $C_x(j)$ is an even integer. Assume that the statement holds for each $z\in \Gamma$ with ${\rm ord}(z)=2^{t-1}m$, where $m$ is odd and $t-1 \geq 2$. Let $x\in \Gamma$ with ${\rm ord}(x)=2^{t}m$, where $m$ is odd and $t \geq 3$. Since the order of $x^2$ is $2^{t-1}m$, by induction hypothesis $C_{x^2}(j)$ is an even integer. If $C_{x^2}(j)= 0$, then clearly $C_{x}(j)= 0$, an even integer. By Equation~(\ref{EqNewSum4mLemma11}), $\chi_j(x^m) + \chi_j(x^{3m})$ is a rational algebraic integer whenever $C_{x^2}(j)\neq 0$. Thus if $C_{x^2}(j)\neq 0$, then $\chi_j(x^m) + \chi_j(x^{3m})$ is an integer. Hence by Equation~(\ref{EqNewSum4mLemma11}) and induction hypothesis, $C_x(j)$ is an even integer for each $j\in \{1,\ldots,h\}$. Thus the proof is complete by induction.
\end{proof}

Let $x \in \Gamma(4)$ be admissible. For $j\in \{1,\ldots,h\}$, define 
$$S_x(j):= \frac{{\mathbf i}}{\chi_j({\mathbf 1})} \sum_{s \in S^4_x} ( \chi_{j}(s)-\chi_{j}(s^{-1})).$$ 
Note that $S_x(j)$ is an H-eigenvalue of the normal oriented Cayley graph ${\rm Cay}(\Gamma, S^4_x)$ for each $j\in \{1,\ldots,h\}$. Since $S^4_x \in \mathbb{D}(\Gamma)$, by Theorem~\ref{NorOriCayGraphInteg} $S_x(j)$ is an integer for each $j\in \{1,\ldots,h\}$.

\begin{lema}\label{NewSum4mLemma33} Let $x\in \Gamma(4)$ be admissible and ${\rm ord}(x)=2^tm$. If $m$ is odd and $t\geq 2$, then
$$S_x(j)=\left\{ \begin{array}{ll}
					-2 \Im(\chi_j(x^{3m})) C_{x^2}(j) & \mbox{if } m \equiv 1 \Mod 4 \\
					-2 \Im(\chi_j(x^{m})) C_{x^2}(j) & \mbox{if } m \equiv 3 \Mod 4.
				\end{array}\right.$$ 
Moreover, $S_x(j)$ is an even integer for each $j\in \{1,\ldots,h\}$.
\end{lema}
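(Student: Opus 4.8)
The plan is to follow the template of the proof of Lemma~\ref{NewSum4mLemma22}, but to reduce everything to $C_{y^2}(j)$ rather than $C_{y^4}(j)$, using parts $(ii)$ and $(iii)$ of Lemma~\ref{NewLemmaEquivaTrans1} in place of part $(iv)$. First I would fix a disjoint decomposition $S_y^4=\llbracket y_1\rrbracket\cup\cdots\cup\llbracket y_k\rrbracket$ with $y_1,\dots,y_k\in Cl(y)$ (Lemma~\ref{SymbolSetChara2}) and write $S_y(j)=\frac{1}{\chi_j(1)}\sum_{r=1}^{k}\sum_{s\in\llbracket y_r\rrbracket}i(\chi_j(s)-\chi_j(s^{-1}))$. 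Splitting on the residue of $m$ modulo $4$, I would rewrite each class $\llbracket y_r\rrbracket$ as a translate of $[y_r^2]$: for $m\equiv 3\Mod 4$ part $(iii)$ gives $\llbracket y_r\rrbracket=y_r^{m}[y_r^2]$ directly, while for $m\equiv 1\Mod 4$ I would first use the involution $s\mapsto s^{-1}$, which bijects $\llbracket y_r\rrbracket$ onto $\llbracket y_r^{-1}\rrbracket$ and flips the sign of $i(\chi_j(s)-\chi_j(s^{-1}))$, and then apply part $(ii)$ to express $\llbracket y_r^{-1}\rrbracket=y_r^{3m}[y_r^2]$.

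Once each class is written as $y_r^{am}[y_r^2]$ with $a\in\{1,3\}$, the next step is to substitute $s=y_r^{am}w$ with $w\in[y_r^2]$ and to extract the common factor $i(\chi_j(y_r^{am})-\chi_j(y_r^{-am}))=-2\,\Im(\chi_j(y_r^{am}))$, using that $[y_r^2]$ is closed under inversion so that $\sum_{w\in[y_r^2]}\chi_j(w)=\sum_{w\in[y_r^2]}\chi_j(w^{-1})$. Since each $y_r$ is conjugate to $y$, the quantity $\Im(\chi_j(y_r^{am}))=\Im(\chi_j(y^{am}))$ is independent of $r$ and pulls out of the sum, and the remaining sum $\frac{1}{\chi_j(1)}\sum_{r}\sum_{w\in[y_r^2]}\chi_j(w)$ is exactly $C_{y^2}(j)$ by part $(ii)$ of Lemma~\ref{NewLemmaEquivaTrans3} (which identifies $S_{y^2}^1=[y_1^2]\cup\cdots\cup[y_k^2]$). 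Carrying the extra sign through the $m\equiv 1$ reindexing then produces the two displayed expressions for $S_y(j)$.

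For the evenness claim I would observe that $y^2$ has order $2^{t-1}m$ with $t-1\ge 2$, so Lemma~\ref{NewSum4mLemma11} applies and $C_{y^2}(j)$ is an \emph{even} integer; this is precisely where the hypothesis $t\ge 3$ enters, and it is what distinguishes this lemma from Lemma~\ref{NewSum4mLemma22}, where evenness came instead from $\Im(\chi_j(y^m))\in\mathbb{Z}$. Writing $C_{y^2}(j)=2C'$ with $C'\in\mathbb{Z}$, the formula gives $S_y(j)=\pm 2\,\Im(\chi_j(y^{am}))\cdot 2C'$; since $2\,\Im(\chi_j(y^{am}))=-i(\chi_j(y^{am})-\chi_j(y^{-am}))$ is an algebraic integer while $S_y(j)$ is already known to be a rational integer (it is an H-eigenvalue of $\text{Cay}(\Gamma,S_y^4)$ with $S_y^4\in\mathbb{D}(\Gamma)$, by Theorem~\ref{NorOriCayGraphInteg}), the number $S_y(j)/2$ is a rational algebraic integer, hence an integer. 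Thus $S_y(j)$ is even.

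The main obstacle I anticipate is the extraction step in the second paragraph. For abelian $\Gamma$ the characters $\chi_j$ restrict to linear characters on $\langle y_r\rangle$, so $\chi_j(y_r^{am}w)=\chi_j(y_r^{am})\chi_j(w)$ and the factorisation is immediate; for a general group this identity fails, so the clean separation of $\Im(\chi_j(y^{am}))$ must instead be read off from the eigenvalues of $\rho_j(y_r)$ (which are $n$-th roots of unity, whence $\rho_j(y_r^{am})$ has eigenvalues among the $2^t$-th roots of unity) together with the inversion symmetry of $[y_r^2]$. Getting this bookkeeping right — in particular the correct coset ($y_r^m$ versus $y_r^{3m}$) and the resulting sign in the $m\equiv 1\Mod 4$ case, where part $(ii)$ of Lemma~\ref{NewLemmaEquivaTrans1} must be invoked through the $s\mapsto s^{-1}$ reindexing — is the delicate part of the argument.
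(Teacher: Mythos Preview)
Your proposal follows the paper's proof essentially step for step: the same disjoint decomposition of $S_y^4$ via Lemma~\ref{SymbolSetChara2}, the same case split on $m\bmod 4$ with the $s\mapsto s^{-1}$ reindexing when $m\equiv 1$, the same appeal to parts $(ii)$ and $(iii)$ of Lemma~\ref{NewLemmaEquivaTrans1} and to part $(ii)$ of Lemma~\ref{NewLemmaEquivaTrans3}, and the same use of Lemma~\ref{NewSum4mLemma11} (applied to $y^2$, of order $2^{t-1}m$ with $t-1\ge 2$) to obtain that $C_{y^2}(j)$ is even. The factorisation $\chi_j(y_r^{am}w)=\chi_j(y_r^{am})\chi_j(w)$ that you flag as the delicate point is precisely what the paper writes in its third displayed equality without further comment, so your outline and the paper's argument coincide there as well; your evenness argument (showing $S_y(j)/2$ is a rational algebraic integer) is a harmless variant of the paper's, which instead asserts directly that $2\,\Im(\chi_j(y^{am}))$ is a rational algebraic integer and then multiplies by the even $C_{y^2}(j)$.
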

\begin{proof} Let $S_x^4= \llbracket x_1 \rrbracket  \cup \cdots \cup \llbracket x_k \rrbracket$ for some $x_1,\ldots,x_k\in {\rm Cl}(x)$. For $j\in \{1,\ldots,h\}$, we have

\begin{equation*}
\begin{split}
S_x(j) &= \frac{{\mathbf i}}{\chi_j({\mathbf 1})} \sum_{r=1}^{k}  \sum_{s \in \llbracket x_r \rrbracket}( \chi_j(s)-\chi_j(s^{-1}))\\
&= \left\{ \begin{array}{ll}
					\frac{{\mathbf i}}{\chi_j({\mathbf 1})}\sum\limits_{r=1}^{k}  \sum\limits_{s \in \llbracket x_r \rrbracket}( \chi_j(s)-\chi_j(s^{-1})) & \mbox{if } m \equiv 1 \Mod 4 \\
					\frac{{\mathbf i}}{\chi_j({\mathbf 1})}\sum\limits_{r=1}^{k}  \sum\limits_{s \in \llbracket x_r \rrbracket}( \chi_j(s)-\chi_j(s^{-1})) & \mbox{if } m \equiv 3 \Mod 4
				\end{array}\right.\\
&= \left\{ \begin{array}{ll}
					\frac{{\mathbf i}}{\chi_j({\mathbf 1})}\sum\limits_{r=1}^{k}  \sum\limits_{s \in [ x_r^2 ] }( \chi_j(x_r^{3m}) \chi_j(s)-\chi_j(x_r^{-3m})\chi_j(s^{-1})) & \mbox{if } m \equiv 1 \Mod 4 \\
					\frac{{\mathbf i}}{\chi_j({\mathbf 1})}\sum\limits_{r=1}^{k}  \sum\limits_{s \in [ x_r^2 ]}( \chi_j(x_r^m) \chi_j(s)- \chi_j(x_r^{-m})\chi_j(s^{-1})) & \mbox{if } m \equiv 3 \Mod 4
				\end{array}\right.\\
&= \left\{ \begin{array}{ll}
					-2 \Im(\chi_j(x^{3m})) \frac{1}{\chi_j({\mathbf 1})}  \sum\limits_{r=1}^{k}  \sum\limits_{s \in [ x_r^2 ] }\chi_j(s) & \mbox{if } m \equiv 1 \Mod 4 \\
					-2 \Im(\chi_j(x^{m})) \frac{1}{\chi_j({\mathbf 1})}\sum\limits_{r=1}^{k}  \sum\limits_{s \in [ x_r^2 ]} \chi_j(s) & \mbox{if } m \equiv 3 \Mod 4
				\end{array}\right.\\
&= \left\{ \begin{array}{ll}
					-2 \Im(\chi_j(x^{3m})) C_{x^2}(j) & \mbox{if } m \equiv 1 \Mod 4 \\
					-2 \Im(\chi_j(x^{m})) C_{x^2}(j) & \mbox{if } m \equiv 3 \Mod 4.
				\end{array}\right.\\
\end{split} 
\end{equation*}
The third equality in the preceding equations follows from Part (i) of Lemma~\ref{NewLemmaEquivaTrans1} and the fifth equality follows from Lemma~\ref{NewLemmaEquivaTrans3}.
If $t=2$, then $\chi_j(x^{3m})$ and $\chi_j(x^{m})$ are traces of matrices whose eigenvalues are $4$-th roots of unity. Therefore, $\Im(\chi_j(x^{3m}))$ and $\Im(\chi_j(x^{m}))$ are integers. Thus $S_x(j)$ is an even integer. Now assume that $t \geq 3$. If $C_{x^2}(j) = 0$, then clearly $S_x(j)=0$, an even integer. Note that $2 \Im(\chi_j(x^{3m}))$ and $2 \Im(\chi_j(x^{m}))$ are rational algebraic integers whenever $C_{x^2}(j)\neq 0$. Thus if $C_{x^2}(j)\neq 0$, then $2 \Im(\chi_j(x^{3m}))$ and $2 \Im(\chi_j(x^{m}))$ are integers. Since the order of $x^2$ is $2^{t-1}m$, by Lemma~\ref{NewSum4mLemma11} $S_x(j)$ is an even integer.
\end{proof}

Let $S$ be a nonempty set in $\mathbb{D}(\Gamma)$ and that $S$ be expressible as a union of some conjugacy classes of $\Gamma$. Then $S$ is a skew-symmetric subset of $\Gamma$ that is closed under both conjugacy and the equivalence relation $\approx$. Let $S={\rm Cl}(x_1)\cup\cdots \cup  {\rm Cl}(x_k)=\llbracket y_1 \rrbracket  \cup \cdots \cup \llbracket y_r \rrbracket$ for some $x_1,\ldots,x_k,y_1,\ldots,y_r\in \Gamma(4)$. We see that
\begin{align*}
S=  {\rm Cl}(x_1)\cup\cdots \cup  {\rm Cl}(x_k)
= \left(\text{\footnotesize$\bigcup\limits_{s\in {\rm Cl}(x_1)}$} \llbracket s \rrbracket  \right)\cup \cdots \cup  \left(\text{\footnotesize$\bigcup\limits_{s\in {\rm Cl}(x_k)}$} \llbracket s \rrbracket  \right) 
= S_{x_1}^4 \cup\cdots \cup S_{x_k}^4 .
\end{align*}
Due to Lemma~\ref{partition2}, we can assume that the sets $S_{x_1}^4 ,\ldots , S_{x_k}^4$ are all distinct. In the next result, we prove the converse of Theorem~\ref{MinCharacEisensteinIntegCh4}.

\begin{theorem}\label{MinCharacEisensteinInteg2}
If $\Gamma$ is a finite group, then the normal mixed Cayley graph ${\rm Cay}(\Gamma,S)$ is Gaussian integral if and only if it is H-integral.
\end{theorem}
\begin{proof}
Assume that the normal mixed Cayley graph ${\rm Cay}(\Gamma,S)$ is H-integral. It is enough to show that $f_j(S)$ and $g_j(S)$ are integers for $j\in \{1,\ldots,h\}$. By Theorem~\ref{NormCayIntegChara}, we get  $S\setminus \overline{S}\in \mathbb{B}(\Gamma)$ and $\overline{S}\in \mathbb{D}(\Gamma)$. Therefore $\overline{S}=  S_{x_1}^{4} \cup\cdots\cup S_{x_r}^{4}$ for some $x_1,\ldots,x_r \in \Gamma(4)$, where the sets $S_{x_1}^{4}, \ldots , S_{x_r}^{4}$ are all distinct. By Lemma~\ref{NewSum4mLemma33}, $S_{x_1}(j)+\cdots+S_{x_r}(j)$ is an even integer. As $g_j(S)= \frac{1}{2}(S_{x_1}(j)+\cdots+S_{x_r}(j))$, we find that $g_j(S)$ is an integer.
Observe that $S_{x_i}^{4} \cup S_{x_i^{-1}}^{4} = S_{x_i}^{1}$, and so $\overline{S} \cup \overline{S}^{-1} = S_{x_1}^{1} \cup \cdots \cup S_{x_r}^{1}$. Therefore $f_j(S)=\frac{1}{\chi_{j}({\mathbf 1})}\sum\limits_{s \in S\setminus \overline{S}} \chi_j(s) + \frac{1}{2} (C_{x_1}(j) + \ldots + C_{x_r}(j))$. By Theorem~\ref{NorMixCayGraphInteg}, $\frac{1}{\chi_{j}({\mathbf 1})}\sum\limits_{s \in S\setminus \overline{S}} \chi_j(s)$ is an integer. Also, by Lemma~\ref{NewSum4mLemma11}, $C_{x_i}(j)$ is an even integer for each $i\in \{1,\ldots,r\}$. Hence we find that $f_j(S)$ is an integer. The other part of the theorem is already proved in Theorem~\ref{MinCharacEisensteinIntegCh4}.
\end{proof}

We give the following example to illustrate Theorem~\ref{MinCharacEisensteinInteg2}.

\begin{ex} \normalfont Consider the normal mixed Cayley graph ${\rm Cay}(M_{16}, S)$ of Example~\ref{example2}. We have already seen that it is H-integral, and hence it must be Gaussian integral. Indeed, using Theorem~\ref{EigNorColCayMix1}, the spectrum of  ${\rm Cay}(M_{16}, S)$ is obtained as  $$\{ [\gamma_{j}]^{1} \colon 1 \leq j \leq 8\}\cup \{[\gamma_{9}]^{4},[\gamma_{10}]^{4}\},\text{ where }$$   $$\gamma_j = \frac{1}{\chi_j(1)} \big[ \chi_j(a) + \chi_j(a^3) + \chi_j(a^5) + \chi_j(a^7) + \chi_j(a^3 x) + \chi_j(a^7 x) \big] \text{ for each } j\in \{1,\ldots,10\}.$$
We find that $\gamma_{1}=6$, $\gamma_{2}=-6$, $\gamma_{3}=2$, $\gamma_{4}=-2$, $\gamma_{6}=\gamma_{7}=2{\mathbf i}$, $\gamma_{5}=\gamma_{8}=-2{\mathbf i}$, and $\gamma_9=\gamma_{10}=0$. Thus ${\rm Cay}(M_{16}, S)$ is Gaussian integral. 
\end{ex}

\noindent\textbf{Conflict of Interest}: We declare that we have no conflict of interest to this work. 




\end{document}